\newfont{\gothic}{eufm10 scaled 1100}
\theoremstyle{plain}    
\newtheorem{thm}{Theorem}[section]
\numberwithin{figure}{section} %% Comment out for sequentially-numbered
\theoremstyle{plain}    
\newtheorem{cor}[thm]{Corollary} %%Delete [thm] to re-start numbering
\theoremstyle{plain}    
\newtheorem{cor-def}[thm]{Corollary-Definition} %%Delete [thm] to re-start numbering
\theoremstyle{plain}    
\newtheorem{conj}[thm]{Conjecture} %%Delete [thm] to re-start numbering
\theoremstyle{plain}    
\theoremstyle{plain}
\newtheorem{lem}[thm]{Lemma} %%Delete [thm] to re-start numbering
\theoremstyle{plain}    
\newtheorem{prop}[thm]{Proposition} %%Delete [thm] to re-start numbering
\theoremstyle{plain}    
\newtheorem{Def}[thm]{Definition} %%Delete [thm] to re-start numbering
\theoremstyle{remark}
\theoremstyle{remark}
\begin{document}

\title{Iterative dissection of Okounkov bodies of graded linear series on $\mathbb{CP}^2$}

%\date{\today}

\author{Thomas Eckl}

\keywords{Okounkov bodies, Nagata Conjecture}

\subjclass{14J26, 14C20}

%\thanks{}

\address{Thomas Eckl, Department of Mathematical Sciences, The University of Liverpool, Mathematical
               Sciences Building, Liverpool, L69 7ZL, England, U.K.}

\email{thomas.eckl@liv.ac.uk}

\urladdr{http://pcwww.liv.ac.uk/~eckl/}

\maketitle

\begin{abstract}
Let $\pi: X \rightarrow \mathbb{P}^2$ be the blow-up of $\mathbb{CP}^2$ in $n$ points $x_i$ in very general position, and let $E_i$ be the exceptional divisor over $x_i$.
For $0 \leq n \leq 9$ we calculate Okounkov bodies of graded linear series given by sections of multiples of line bundles $\pi^\ast \mathcal{O}_{\mathbb{P}^2}(d) \otimes \mathcal{O}_X(-m\sum_{i=1}^n E_i)$ with respect to a flag consisting of a line on $\mathbb{CP}^2$ and a point on the line in general position. Furthermore, we show what Nagata's Conjecture predicts on these Okounkov bodies when $n > 9$.
\end{abstract}

%\tableofcontents

\pagestyle{myheadings}
\markboth{THOMAS ECKL}{ITERATIVE DISSECTION OF OKOUNKOV BODIES}

\setcounter{section}{-1}

\section{Introduction}

\noindent In 1996, Okounkov (\cite{Oko96, Oko03}) constructed in a side remark convex polytopes associated to graded linear series on projective algebraic varieties, generalizing the construction of Newton polytopes associated to single polynomials and of moment polytopes associated to toric varieties. As was made precise later on by the work of Lazarsfeld and Musta\c{t}\u{a} \cite{LM08} and Khovanskii and Kaveh \cite{KK08}, these \textit{\mbox{(Newton-)}Okounkov bodies} encode invariants of the graded linear series. In recent years there were also striking applications of this connection between Algebraic Geometry and Convex Geometry to the study of semigroups \cite{KK12} and to Symplectic Geometry \cite{HK12}.

\noindent However, Okounkov bodies are still difficult to explicitely calculate whenever we leave the toric situation, even in simple settings on projective complex algebraic surfaces (see e.g the challenge stated in \cite{DKMS13}). This is not too surprising, as the geometry of Okounkov bodies is closely related to the structure of the cone of big divisors on a projective algebraic variety, and this cone is known to have an intricate structure for a long time. For example, it is still not known what the big cone of $\mathbb{CP}^2$ blown up in $n>9$ points looks like (see \cite[Ch.5]{LazPAG1} for some conjectures on the structure of the dual ample cone). 

\noindent On the other hand, Lazarfeld's and Musta\c{t}\u{a}'s characterisation of Okounkov bodies associated to complete linear series on projective complex algebraic surfaces (\cite[Thm.6.4]{LM08}, completed by \cite[Thm.B]{KLM13}) contains an algorithm to calculate these Okounkov bodies when we have enough knowledge of the big cone of this surface. {\L}uszcz-{\'S}widecka and Schmitz \cite{LS14} introduced further shortcuts to make the algorithm more efficient. 

\noindent In this note, we investigate the Okounkov bodies associated to certain complete linear series on $\mathbb{CP}^2$ blown up in $n$ points in general position. We are able to calculate the Okounkov bodies if $n \leq 9$ since the big cones of $\mathbb{CP}^2$ blown up in at most $9$ points are well enough understood. The new technical ingredient is the combination of the algorithms of Lazarsfeld and  Musta\c{t}\u{a}, with the amendments in \cite{KLM13}, and the symmetry of the considered linear systems. As a surprising corollary we are able to predict the form of the Okounkov bodies for $n > 9$ using Nagata's Conjecture on the big cone of $\mathbb{CP}^2$ blown up in more than $9$ points: The genericity of their form corresponds to the non-special behaviour of the linear systems predicted by the Conjecture.  For exact statements and proofs, see section~\ref{calc-sec}.

\section{Okounkov bodies on surfaces and Zariski decompositions} \label{OB-ZD-sec}

\noindent Assume from now on that all algebraic varieties are complex. 

\noindent To prove the results in section~\ref{calc-sec}, we need a characterisation of Okounkov bodies of big $\mathbb{R}$-divisors on smooth projective complex algebraic surfaces, first stated by Lazarsfeld and Musta\c{t}\u{a} \cite[Thm.6.4]{LM08} and completed by K\"uronya, Lozovanu and Maclean \cite[Thm.B]{KLM13}:
\begin{thm} \label{OB-surf-thm}
Let $D$ be a big $\mathbb{R}$-divisor on a smooth projective algebraic surface $X$, and let $Y_\bullet: X \supset C \supset \{x\}$ be an admissible flag on $X$, with $C$ an irreducible and reduced curve on $X$ and $x \in C$ a nonsingular point on $C$. Set
\[ \mu = \mu(D;C) := \sup \{s > 0 | D - sC\ \mathrm{is\ big} \}. \]
Then there exist continuous functions $\alpha, \beta: [a,\mu] \rightarrow \mathbb{R}_+$ for some $0 \leq a \leq \mu$ with $\alpha$ convex and increasing, $\beta$ concave, $\alpha \leq \beta$, and both $\alpha$ and $\beta$ piecewise linear with rational slopes and only finitely many breakpoints such that the Okounkov body $\Delta_{Y_\bullet}(D) \subset \mathbb{R}^2_+$ is the region bounded by the graphs of $\alpha$ and $\beta$,
\[ \Delta_{Y_\bullet}(D) = \{(t,y) \in \mathbb{R}^2_+ | a \leq t \leq \mu, \alpha(t) \leq y \leq \beta(t) \}. \] 
\end{thm}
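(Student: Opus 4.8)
The plan is to describe the Okounkov body $\Delta_{Y_\bullet}(D)$ fiber-by-fiber over the first coordinate, exploiting the fact that for a flag with $C$ a curve, the first coordinate of the valuation records the order of vanishing along $C$, and the second records vanishing at $x$ along $C$. The starting point is the general identification of $\Delta_{Y_\bullet}(D)$ as the closed convex hull of valuation vectors $\mathrm{ord}_{Y_\bullet}(s)$ for sections $s$ of multiples of $D$. I would first observe that subtracting $tC$ and taking the vanishing order along $C$ shows that the slice $\Delta_{Y_\bullet}(D) \cap (\{t\} \times \mathbb{R})$ is, up to normalization, the one-dimensional Okounkov body of the $\mathbb{R}$-divisor $D - tC$ restricted to $C$ with respect to the point $x \in C$, at least for $t$ in the range where $D-tC$ is big.

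\medskip

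\noindent\textbf{Identifying the boundary functions via Zariski decomposition.} The key technical step is to compute these slices using the Zariski decomposition $D - tC = P_t + N_t$ into positive and negative parts. For each $t \in [0,\mu]$ I would write $\alpha(t)$ as the coefficient of $C$ in the negative part $N_t$ (equivalently $\mathrm{ord}_x(N_t|_C)$), and $\beta(t) = \alpha(t) + (P_t \cdot C)$, so that the width of the slice is exactly the intersection number of the positive part with $C$. The heuristic is that sections of $D-tC$ all vanish along the fixed part $N_t$, forcing the lower bound $\alpha$, while the mobile part $P_t$ governs how much the valuation at $x$ can vary, giving the degree $\beta - \alpha = P_t \cdot C$. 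I would verify these formulas by relating the restricted volume / restricted linear series of $D-tC$ along $C$ to the intersection-theoretic data of the Zariski decomposition, using that $P_t$ is nef and $P_t|_C$ computes the relevant one-dimensional volume on the curve $C$.

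\medskip

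\noindent\textbf{Regularity of $\alpha$ and $\beta$.} Once the formulas $\alpha(t) = \mathrm{mult}_C N_t$ and $\beta(t) = \alpha(t) + P_t\cdot C$ are established, the stated properties follow from the variation of the Zariski decomposition as $t$ varies. The function $t \mapsto N_t$ is piecewise linear in $t$ with rational breakpoints, since the support of the negative part and the matrix governing its coefficients change only finitely often as $D-tC$ crosses walls of the big cone; this gives that $\alpha$ is piecewise linear with rational slopes and finitely many breakpoints, and convexity plus monotonicity follow because the fixed part can only grow (in the relevant directions) as we subtract more of $C$. For $\beta$, concavity is essentially the log-concavity / concavity of $t \mapsto P_t \cdot C$, which reflects the concavity of the positive part as a function on the big cone (the self-intersection $P_t^2$ is concave, and pairing the nef $P_t$ against the fixed curve $C$ inherits concavity). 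Continuity on the closed interval $[a,\mu]$, and the determination of the left endpoint $a$ (where the slice first becomes nonempty, i.e. where $x$ leaves the augmented base locus of $D-tC$ restricted to $C$), round out the argument.

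\medskip

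\noindent The step I expect to be the main obstacle is the rigorous identification of the one-dimensional slices with intersection data of the Zariski decomposition, and in particular the boundary behaviour at the endpoints $a$ and $\mu$. Near $t = \mu$ the divisor $D - tC$ degenerates to the boundary of the big cone, where the positive part can drop in dimension and the restricted volume may vanish discontinuously in principle; controlling this requires the refinement of \cite[Thm.6.4]{LM08} supplied by \cite[Thm.B]{KLM13}, precisely to guarantee that $\alpha$ and $\beta$ extend \emph{continuously} to the closed interval and that $\Delta_{Y_\bullet}(D)$ is genuinely closed. Handling the point $x$'s position relative to the base loci (the genericity of the flag) is what pins down $a$ and ensures $\alpha$ and $\beta$ meet the axes correctly.
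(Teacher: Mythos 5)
Your proposal is correct in outline and takes essentially the same route as the paper, which in fact gives no independent proof: it cites \cite[Thm.6.4]{LM08} and \cite[Thm.B]{KLM13} and then simply records the Zariski-decomposition description you derive, namely that $a$ is the coefficient of $C$ in the negative part of $D$, that $\alpha(t)=\mathrm{ord}_x(N_t|_C)$ and $\beta(t)=\alpha(t)+C\cdot P_t$, with breakpoints controlled by the Zariski-chamber structure of the big cone and the continuity and finiteness statements at the boundary supplied by \cite{KLM13}, exactly as you anticipate. One slip to fix in your write-up: $\alpha(t)$ is $\mathrm{ord}_x(N_t|_C)$ and \emph{not} the coefficient $\mathrm{mult}_C N_t$ of $C$ in $N_t$, and the two are not ``equivalent'' as your first paragraph asserts --- $\mathrm{mult}_C N_t$ vanishes for $t\geq a$ (it is the quantity that pins down the left endpoint $a$), whereas $\mathrm{ord}_x(N_t|_C)$ is the multiplicity at $x$ of the restriction to $C$ of the negative part, which is the genuine lower boundary; with that conflation repaired (your later formula $\alpha(t)=\mathrm{mult}_C N_t$ would make $\alpha\equiv 0$ on $[a,\mu]$, which is false in general), the sketch matches the cited proof.
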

\noindent The proof of this characterisation relies on the notion of Zariski decompositions of pseudo-effective $\mathbb{R}$-divisors.
\begin{Def} \label{ZD-def}
Let $D$ be a pseudo-effective $\mathbb{R}$-divisor on a smooth projective algebraic surface $X$. A decomposition $D = P + N$ into a nef $\mathbb{R}$-divisor $P$ (the positive part) and an effective $\mathbb{R}$-divisor $N$ (the negative part) is called a Zariski decomposition of $D$ if $P \cdot C_i = 0$ for all $i = 1, \ldots, q$ and the intersection matrix $(C_i \cdot C_j)_{1 \leq i,j \leq q}$ is negative-definite, where $C_1, \ldots, C_q$ are the reduced and irreducible components of the support of $N$.
\end{Def}
\noindent Existence and uniqueness of Zariski decompositions is a a classical result about divisors on algebraic surfaces, proven e.g. in \cite[Thm.14.14]{Bad01} for $\mathbb{Q}$-divisors, but the proof generalizes to $\mathbb{R}$-divisors.

\noindent Using Zariski decompositions the proof of Thm.~\ref{OB-surf-thm} in \cite{LM08} describes the constant $a$ and the functions $\alpha, \beta$ appearing in the statement in more details:
\begin{itemize}
\item[(i)] $a$ is the coefficient of $C$ in the support of the negative part $N$ in the Zariski decomposition $D = P+N$.
\item[(ii)] If $t \geq a$ and $D_t := D - t \cdot C = P_t + N_t$ is a Zariski decomposition then
\[ \alpha(t) = \mathrm{ord}_x N_{t|C}\ \mathrm{and\ } \beta(t) = \alpha(t) + C \cdot P_t. \]
\end{itemize}

\noindent The breakpoints of the piecewise linear functions $\alpha(t), \beta(t)$ appear when the Zariski decomposition of $D - t \cdot C$ considerably changes. This happens when the line $\{D - t \cdot C | t \geq a\}$ crosses the border between two \textit{Zariski chambers} of the big cone: If $D = P_D + N_D$ is the Zariski decomposition of $D$ define
\[ \mathrm{Neg}(D) := \{ C \subset X\ \mathrm{reduced\ and\ irreducible\ curve}| C \subset \mathrm{Supp}(N_D) \}. \]
Furthermore, if $P$ is nef, set
\[ \mathrm{Null}(P) := \{  C \subset X\ \mathrm{reduced\ and\ irreducible\ curve}| C \cdot P = 0 \}\]
 and 
\[ \mathrm{Face}(P) := \mathrm{Null}(P)^\perp \cap \mathrm{Nef}(X). \]
Then the Zariski chamber of $P$ is defined as
\[ \Sigma_P := \{ D \in \mathrm{Big}(X) | \mathrm{Neg}(D) = \mathrm{Null}(P) \}, \]
and all the Zariski chambers cover the big cone (see \cite[{\S} 1]{BKS04} for more details).

\noindent Using the Hodge Index Theorem \cite[Thm.V.1.9]{Hart:AG} it is easy to show that a non-empty set $\mathrm{Null}(P)$ consists of finitely many curves with negative-definite intersection matrix. We also need a characterisation of the closure of the Zariski chamber $\Sigma_P$:
\begin{lem}[{\cite[Prop.1.8]{BKS04}}] \label{clos-ZC-lem}
$\overline{\Sigma_P}$ is the cone generated by $\mathrm{Face}(P)$ and the curves in $\mathrm{Null}(P)$.
\end{lem}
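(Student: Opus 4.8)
The plan is to reduce everything to an explicit parametrisation of the Zariski chamber $\Sigma_P$ and then to pass to the closure by a perturbation argument together with a closedness check. Write $\mathrm{Null}(P) = \{C_1,\dots,C_r\}$; by the remark preceding the lemma these curves have negative-definite intersection matrix. The core claim I would establish first is that
\[ \Sigma_P = \Bigl\{\, A + \sum_{i=1}^r a_i C_i \ :\ A \in \mathrm{Face}(P),\ A^2 > 0,\ a_i > 0 \,\Bigr\}. \]
For the inclusion $\supseteq$ I would verify that, for such $A$ and $a_i$, the expression $A + \sum a_i C_i$ is \emph{already} a Zariski decomposition: $A$ is nef, $\sum a_i C_i$ is effective, $A\cdot C_i = 0$ because $A \in \mathrm{Null}(P)^\perp$, and the $C_i$ have negative-definite intersection matrix. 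By uniqueness of Zariski decompositions the positive part is $A$ and the negative part is $\sum a_i C_i$, so $\mathrm{Neg}(D) = \mathrm{Null}(P)$; moreover the volume of $D$ equals $A^2 > 0$, so $D$ is big and hence lies in $\Sigma_P$. The reverse inclusion is the same computation read backwards: for $D \in \Sigma_P$ the positive part $P_D$ is nef and orthogonal to every $C_i$, so $P_D \in \mathrm{Face}(P)$ with $P_D^2 = \mathrm{vol}(D) > 0$, while the negative part is $\sum a_i C_i$ with all $a_i > 0$.

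Granting this parametrisation, let $K = \mathrm{Face}(P) + \sum_i \mathbb{R}_{\ge 0}\, C_i$ be the cone generated by $\mathrm{Face}(P)$ and the curves of $\mathrm{Null}(P)$, so that the lemma asserts $\overline{\Sigma_P} = K$. For $K \subseteq \overline{\Sigma_P}$ I would fix a class $A_0 \in \mathrm{Face}(P)$ with $A_0^2 > 0$ — for instance $A_0 = P$ when $P$ is big and nef, or $A_0 = P_{D_0}$ for any $D_0 \in \Sigma_P$ — and, given $D = A + \sum a_i C_i \in K$, set
\[ D_\varepsilon = (A + \varepsilon A_0) + \sum_{i=1}^r (a_i + \varepsilon)\, C_i. \]
Since $A + \varepsilon A_0 \in \mathrm{Face}(P)$, since $(A + \varepsilon A_0)^2 \ge \varepsilon^2 A_0^2 > 0$ (the cross terms are nonnegative as $A$ and $A_0$ are nef), and since $a_i + \varepsilon > 0$, the parametrisation gives $D_\varepsilon \in \Sigma_P$; letting $\varepsilon \to 0$ shows $D \in \overline{\Sigma_P}$.

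For the opposite inclusion the parametrisation gives $\Sigma_P \subseteq K$ directly, so it suffices to prove that $K$ is closed. Here I would argue that the linear spans of the two generating cones meet only at the origin: one has $\mathrm{Face}(P) \subseteq \mathrm{Null}(P)^\perp$, whereas the span $V$ of $C_1,\dots,C_r$ carries a negative-definite, hence non-degenerate, intersection form, so the only vector of $V$ orthogonal to all the $C_i$ is $0$; thus $\mathrm{span}(\mathrm{Face}(P)) \cap V = 0$. The two cones therefore lie in complementary subspaces, on whose direct sum the addition map is a linear isomorphism, and the Minkowski sum of the closed cone $\mathrm{Face}(P)$ and the polyhedral (hence closed) cone $\sum_i \mathbb{R}_{\ge 0}\, C_i$ is closed. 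Combining the two inclusions yields $\overline{\Sigma_P} = K$.

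The main obstacle is the explicit parametrisation of $\Sigma_P$: it rests on the uniqueness of Zariski decompositions and on the identity expressing the volume as the self-intersection of the positive part, and one must take care that the chamber is non-empty — equivalently, that $\mathrm{Face}(P)$ contains a class of positive self-intersection — so that the perturbation genuinely produces points of $\Sigma_P$ rather than of its boundary. The closedness of $K$, though elementary, is the second point where the negative-definiteness furnished by the Hodge Index Theorem is essential and cannot be omitted, since a Minkowski sum of two closed cones need not be closed in general.
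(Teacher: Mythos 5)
Your proof is correct; note that the paper itself offers no argument for this lemma but simply quotes it from \cite{BKS04}, and what you have written is essentially a self-contained reconstruction of the proof in that reference. The load-bearing step is exactly the right one: the parametrisation $\Sigma_P = \{A + \sum_i a_i C_i \,:\, A \in \mathrm{Face}(P),\ A^2 > 0,\ a_i > 0\}$, which follows from uniqueness of Zariski decompositions together with $\mathrm{vol}(D) = P_D^2$, and both of your closure arguments are sound --- the perturbation $D_\varepsilon = (A + \varepsilon A_0) + \sum_i (a_i+\varepsilon)C_i$ stays in $\Sigma_P$ because nefness makes the cross terms in $(A+\varepsilon A_0)^2$ nonnegative, and the Minkowski sum is closed because $\mathrm{span}(\mathrm{Face}(P)) \subseteq V^\perp$ while negative definiteness forces $V \cap V^\perp = 0$, so the two summands live in complementary subspaces. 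One hypothesis you should make explicit: you need $P$ \emph{big} and nef, not merely nef as in the paper's definition of $\Sigma_P$. Bigness (i.e.\ $P^2 > 0$) is what the Hodge-index argument in the paper's preceding remark tacitly uses to make $\mathrm{Null}(P)$ finite with negative-definite intersection matrix, and it is what guarantees your base class $A_0 = P \in \mathrm{Face}(P)$ with $A_0^2 > 0$; for a nef class with $P^2 = 0$ (say a fibre class) the whole framework degenerates and $\Sigma_P$ can be empty while the cone $K$ is not. Since the lemma is only invoked in the paper (Corollary~\ref{Null-cor}) for nef and big classes, as in \cite{BKS04}, this is a hypothesis to record rather than a gap in your argument.
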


\begin{cor} \label{Null-cor}
If $P$ is a nef and big $\mathbb{R}$-divisor, but not ample, then $\mathrm{Null}(P) \neq \emptyset$.
\end{cor}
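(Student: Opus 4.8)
The plan is to prove the contrapositive: if $P$ is nef and big with $\mathrm{Null}(P) = \emptyset$, then $P$ is ample. The main classical input I would use is the Nakai--Moishezon criterion in its version for $\mathbb{R}$-divisors on a surface (see e.g. \cite{LazPAG1}), which asserts that a divisor $P$ on $X$ is ample if and only if $P^2 > 0$ and $P \cdot C > 0$ for every irreducible curve $C \subset X$.

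First I would record the two pieces of positivity supplied by the hypotheses, since these are exactly the quantities tested by the Nakai criterion. Because $P$ is nef we have $P \cdot C \geq 0$ for every irreducible curve $C$, and because $P$ is moreover big, on a surface this forces $P^2 > 0$ (a nef $\mathbb{R}$-divisor on a surface is big precisely when its self-intersection is strictly positive).

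Next I would argue by contradiction. Suppose $\mathrm{Null}(P) = \emptyset$, so that $P \cdot C \neq 0$ for every irreducible curve $C$; combined with nefness this upgrades to $P \cdot C > 0$ for all such $C$. Together with $P^2 > 0$, the Nakai--Moishezon criterion then yields that $P$ is ample, contradicting the hypothesis. Hence some irreducible curve $C$ must satisfy $P \cdot C = 0$, i.e. $C \in \mathrm{Null}(P)$, proving $\mathrm{Null}(P) \neq \emptyset$.

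The hard part is the step that produces an \emph{honest irreducible curve} in $\mathrm{Null}(P)$, rather than merely a boundary class of the nef cone: Kleiman's criterion alone only guarantees a nonzero limit class $\gamma \in \overline{NE}(X)$ with $P \cdot \gamma = 0$, and such a $\gamma$ need not be the class of a single curve, so a naive argument stays stuck inside the Mori cone. It is exactly here that bigness is essential. By the Hodge Index Theorem \cite[Thm.V.1.9]{Hart:AG}, once $P^2 > 0$ the intersection form is negative definite on $P^\perp$, which is the structural reason the degenerate directions are cut out by finitely many negative curves (consistent with the remark preceding the statement that a nonempty $\mathrm{Null}(P)$ is finite with negative-definite intersection matrix). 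Invoking the $\mathbb{R}$-divisor Nakai criterion is therefore the cleanest way to package this and to bypass a hands-on limiting argument; I would not expect Lemma~\ref{clos-ZC-lem} to deliver the conclusion directly, since its chamber description is phrased through $\mathrm{Neg}$ rather than $\mathrm{Null}$.
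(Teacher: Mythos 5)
Your proof is correct, but it takes a genuinely different route from the paper. You reduce the statement to the Nakai--Moishezon criterion for $\mathbb{R}$-divisors; note that this extension is not formal from the classical integral case but is a real theorem of Campana and Peternell, stated as \cite[Thm.2.3.18]{LazPAG1} --- you should cite that rather than the generic criterion, though your explicit flagging of the ``$\mathbb{R}$-divisor version'' shows you are aware of the issue. Granting that input, your two positivity observations ($P^2>0$ from nef plus big, since $\mathrm{vol}(P)=P^2$ for nef $P$, and $P\cdot C>0$ from nefness plus $\mathrm{Null}(P)=\emptyset$) do close the contrapositive cleanly. The paper instead stays entirely inside the Zariski-chamber machinery of its first section: since the chambers cover the big cone and $P$ sits on the nef boundary but in the big interior, $P$ lies in $\overline{\Sigma_{Q'}}$ for some nef $Q'$ with $\mathrm{Null}(Q')\neq\emptyset$; Lemma~\ref{clos-ZC-lem} then writes $P=\overline{P}+\sum_{C\in\mathrm{Null}(Q')}a_C C$ with $\overline{P}\in\mathrm{Face}(Q')$, one checks this is a Zariski decomposition, and uniqueness of Zariski decompositions together with nefness of $P$ forces the negative part to vanish, giving $\emptyset\neq\mathrm{Null}(Q')\subset\mathrm{Null}(P)$. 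So your closing prediction is wrong on one point: Lemma~\ref{clos-ZC-lem} \emph{does} deliver the conclusion directly, with the passage from the $\mathrm{Neg}$-phrased chamber description to a $\mathrm{Null}$-statement mediated precisely by uniqueness of the Zariski decomposition. What each approach buys: yours is shorter and leans on a heavier external black box (Campana--Peternell), while the paper's is self-contained given the chamber formalism it has already set up, and it yields the slightly finer information that $P$ lies on $\mathrm{Face}(Q')$ with $\mathrm{Null}(Q')\subset\mathrm{Null}(P)$, i.e.\ it locates $P$ on a face of the nef cone cut out by actual curve classes rather than merely asserting nonemptiness.
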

\begin{proof}
If $P$ is nef and big but not ample, $P$ lies on the boundary of the nef cone but still in the interior of the big cone. The nef cone is the Zariski chamber of any divisor $Q$ with $\mathrm{Null}(Q) = \emptyset$. Since all the Zariski chambers cover the big cone there must be another nef divisor $Q^\prime$ with $\mathrm{Null}(Q^\prime) \neq \emptyset$ such that $P \in \overline{\Sigma_{Q^\prime}}$. By Lem.~\ref{clos-ZC-lem} we can decompose $P$ as 
\[ P = \overline{P} + \sum_{C \in \mathrm{Null}(Q^\prime)} a_C C, \]
with $\overline{P} \in \mathrm{Face}(Q^\prime)$. Since the intersection matrix of the finitely many curves in $\mathrm{Null}(Q^\prime)$ is negative-definite and $\overline{P} \cdot C = 0$ for all $C \in \mathrm{Null}(Q^\prime)$, this is a Zariski decomposition of $P$. Since $P$ is nef, $P = \overline{P}$. Since $\overline{P} \in \mathrm{Face}(Q^\prime)$ we have
\[ \emptyset \neq \mathrm{Null}(Q^\prime) \subset \mathrm{Null}(\overline{P}) = \mathrm{Null}(P). \]
\end{proof}

\noindent Finally, by definition the negative part of the Zariski decomposition varies linearly in each Zariski chamber of the big cone. Consequently, breakpoints of the piecewise linear functions $\alpha(t), \beta(t)$ in Thm.~\ref{OB-surf-thm} can only occur for parameters $t$ where the line $\{ D - t \cdot C\}$ crosses the border of a Zariski chamber $\Sigma_P$ (see \cite[Prop.2.1]{KLM13} for further information).

\noindent Thus Thm.~\ref{OB-surf-thm} gives rise to an algorithm how to compute Okounkov bodies from good enough knowledge of the big cone and its decomposition into Zariski chambers. Note that the path given by the line $\{d - t \cdot C\}$ can intersect the Zariski chambers of the big cone in a rather complicated way. {\L}uszcz-{\'S}widecka and Schmitz's algorithm \cite{LS14} simplifies the path used for calculation considerably, but we do not need these improvements in our situation.

\section{Curves on $\mathbb{P}^2$ blown up in several points in general position} \label{curves-sec}

\noindent In this section we collect classically known facts on curves on $\mathbb{P}^2$ and on $\mathbb{P}^2$ blown up in several points needed later on. For some of the proofs we refer to the literature whereas we present others, for lack of clear reference, but without claiming any originality. However we try to explain the use of (very) general position in the arguments in more details than usual.

\noindent Let us first fix some notation. $\pi_n: X_n := X_n(x_1, \ldots, x_n) \rightarrow \mathbb{P}^2$ is supposed to be the blow up of $\mathbb{P}^2$ in $n$ points $x_1, \ldots, x_n \in \mathbb{P}^2$ in general position and $E_i = \pi_n^{-1}(x_i)$ the exceptional divisor over $x_i$. If $L \subset \mathbb{P}^2$ denotes a line, then the Picard classes $e_0, e_1, \ldots, e_n$ of the line bundles
\[ \mathcal{O}_{X_n}(\pi_n^\ast L), \mathcal{O}_{X_n}(E_1), \ldots, \mathcal{O}_{X_n}(E_n) \]
generate the Picard group $A^1(X_n) = \mathbb{Z} \cdot e_0 + \sum_{i=1}^n \mathbb{Z} \cdot e_i$. The canonical line bundle represents the class $k := -3e_0 + e_1 + \cdots + e_n$. 
\begin{Def}
An automorphism $\sigma$ of the free abelian group $A^1(X_n)$ is called a Cremona isometry if the following properties are satisfied:
\begin{itemize}
\item[(i)] $\sigma$ preserves the intersection form on $A^1(X_n)$.
\item[(ii)] $\sigma$ leaves the canonical class $k$ of $X_n$ fixed.
\item[(iii)] $\sigma$ leaves the semigroup of effective classes invariant.
\end{itemize}
\end{Def}

\noindent The group of Cremona isometries on $X_n$ will be denoted by $\mathrm{Cris}(X_n)$. 

\begin{Def}
The group $W_n$ of automorphisms of $A^1(X_n)$ generated by the simple reflections $s_1, \ldots, s_{n-1}, s_n$ given by
\[ s_i(e_i) = e_{i+1}, s_i(e_{i+1}) = e_i\ \mathrm{and\ } s_i(e_j) = e_j\ \mathrm{for\ } i = 1, \ldots, n-1, j \neq i, i+1, \]
\[ s_n(e_0) = 2e_0 - e_1 - e_2 - e_3, s_n(e_1) = e_0 - e_2 - e_3, s_n(e_2) = e_0 - e_1 - e_3, s_n(e_3) = e_0 - e_1 - e_2 \] 
and $s_n(e_j) = e_j$ for $j = 4, \ldots, n$ is called the Weyl group of $X_n$.
\end{Def}
\begin{prop}[{\cite[Thm.1, p.286]{Dol83}}] \label{W-C-thm}
$W_n \subset \mathrm{Cris}(X_n)$. 

\noindent Furthermore, $s \in W_n$ maps $e_0, e_1, \ldots, e_n$ to classes $e_0^\prime, e_1^\prime, \ldots, e_n^\prime$ represented by pullback of lines and exceptional divisors coming from another sequence of blow ups in points $y_1, \ldots, y_n \in \mathbb{P}^2$. In that way, every $s \in W_n$ induces a birational map of the $n$-fold product $(\mathbb{P}^2)^n$ of $\mathbb{P}^2$ onto itself, by setting $s(x_1, \ldots, x_n) := (y_1, \ldots, y_n)$.
\end{prop}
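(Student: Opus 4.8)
The plan is to reduce both assertions to the generators $s_1, \dots, s_n$. Indeed, $\mathrm{Cris}(X_n)$ is a group, and the class of lattice automorphisms admitting a geometric realization by a birational self-map of $(\mathbb{P}^2)^n$ (as in the second assertion) is closed under composition; so it is enough to treat each $s_i$ separately and then compose.

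For properties (i) and (ii) of a Cremona isometry I would argue uniformly. With respect to the standard form ($e_0^2=1$, $e_i^2=-1$ for $i\geq 1$, and $e_i\cdot e_j=0$ otherwise), each generator is the reflection $s_\alpha(v)=v+(v\cdot\alpha)\,\alpha$ in a class $\alpha$ with $\alpha^2=-2$: one takes $\alpha_i=e_i-e_{i+1}$ for $i<n$ and $\alpha_n=e_0-e_1-e_2-e_3$. A one-line check recovers the stated formulas for $s_i$ from this reflection formula. Reflections preserve the intersection form, which gives (i); and since $k\cdot\alpha_i=0$ for every $i$ (immediate from $k\cdot e_0=-3$ and $k\cdot e_j=-1$), we get $s_i(k)=k+(k\cdot\alpha_i)\alpha_i=k$, which gives (ii). The substance of the proposition is the geometric realization, which simultaneously yields property (iii).

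For the transpositions $s_i$ with $i<n$ the realization is immediate: relabeling $x_i\leftrightarrow x_{i+1}$ yields the very same surface with its geometric basis permuted, so effective classes are preserved and the induced map on $(\mathbb{P}^2)^n$ is the corresponding transposition. The essential case is $s_n$, which I would realize through the standard quadratic Cremona transformation $\mathrm{cr}$ with fundamental points $x_1,x_2,x_3$. After blowing up $x_1,x_2,x_3$ the map $\mathrm{cr}$ resolves to a morphism contracting the strict transforms of the three lines $\overline{x_ix_j}$, and this common resolution is at the same time the blow-up of the target plane at the three points $y_1,y_2,y_3$ onto which those lines collapse. For $x_1,\dots,x_n$ in general position the remaining points avoid the fundamental lines, so $\mathrm{cr}$ is a local isomorphism near each of them, their images $y_j:=\mathrm{cr}(x_j)$ are again $n-3$ distinct points in general position, and blowing these up on both sides extends the resolution to an isomorphism of surfaces $X_n(x_1,\dots,x_n)\cong X_n(y_1,\dots,y_n)$. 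Tracking the geometric basis across this isomorphism gives exactly $s_n$: the pullback of a target line is a conic through $x_1,x_2,x_3$, so $e_0'=2e_0-e_1-e_2-e_3$; the exceptional divisor over $y_1$ is the strict transform of $\overline{x_2x_3}$, so $e_1'=e_0-e_2-e_3$ (and symmetrically for $e_2',e_3'$); and $e_j'=e_j$ for $j\geq 4$. Being induced by an isomorphism of surfaces, $s_n$ preserves the semigroup of effective classes (property (iii)) and is realized by the birational self-map $(x_1,\dots,x_n)\mapsto(y_1,\dots,y_n)$ of $(\mathbb{P}^2)^n$.

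The main obstacle is the geometric analysis of $\mathrm{cr}$: verifying carefully that it resolves to an honest isomorphism of the two $n$-point blow-ups, that the images $y_j$ are genuinely $n$ distinct points in general position, and keeping track of exactly which general-position hypotheses on $x_1,\dots,x_n$ are needed for the $y_j$ to be defined. This is precisely where one is forced onto a dense open locus of $(\mathbb{P}^2)^n$, so that $s_n$ is realized only birationally rather than by a global automorphism. By contrast, properties (i) and (ii) and the transposition case are routine.
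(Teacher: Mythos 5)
The paper offers no proof of this proposition, quoting it directly from \cite[Thm.~1, p.~286]{Dol83}; your argument --- reducing to the generators, checking (i) and (ii) via the reflection formula $s_\alpha(v) = v + (v\cdot\alpha)\,\alpha$ in the $(-2)$-classes $\alpha_i$, treating the transpositions $s_1,\ldots,s_{n-1}$ as relabelings, and realizing $s_n$ by the standard quadratic Cremona transformation based at $x_1,x_2,x_3$ --- is precisely the classical proof given in that reference, and it is correct. The one point worth making explicit is that the isomorphism $X_n(x_1,\ldots,x_n)\cong X_n(y_1,\ldots,y_n)$ only identifies the effective semigroups of two \emph{different} surfaces, so property (iii) as an invariance statement for the fixed lattice $A^1(X_n)$ additionally uses that effectivity of a class is the same for both (very) general configurations --- exactly the general-position caveat you flag at the end, and the same point the paper addresses in the ``principle'' it formulates immediately after the proposition.
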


\noindent The proposition implies the following principle: If $C$ is a curve of class $[C]$ on $X(x_1, \ldots, x_n)$ then for every $s \in W_n$ there exists an isomorphic curve on $X(s(x_1, \ldots, x_n))$ of class $s([C])$. Consequently, given a flat family of curves $C \subset X_n(x_1, \ldots, x_n)$ such that the $x_1, \ldots, x_n$ vary in an open subset of $(\mathbb{P}^2)^n$ then for a very general $(x_1, \ldots, x_n) \in (\mathbb{P}^2)^n$ there exists for every of the countably many $s \in W_n$ a curve $C^\prime \subset X_n(x_1, \ldots, x_n)$ isomorphic to a curve $C$ of the family and of class $s([C])$.

\noindent All that can be used to classify the exceptional curves of the first kind on $X_n$, that is all nonsingular rational curves $C \subset X_n$ with $C^2 = -1$:
\begin{thm}[{\cite[Cor.1, p.288]{Dol83}}] \label{self--1-thm}
There is a bijection between the set of exceptional curves of the first kind on $X_n$ and the orbit $W_n e_n$.
\end{thm}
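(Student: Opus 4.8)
The plan is to establish the correspondence by attaching to each exceptional curve of the first kind its class and showing this class lands in $W_n e_n$, and conversely that every class in $W_n e_n$ is represented by a unique such curve. First I would record two numerical constraints. If $C$ is a smooth rational curve with $C^2 = -1$, adjunction gives $C \cdot k = -1$; and since $W_n \subset \mathrm{Cris}(X_n)$ preserves both the intersection form and $k$ by Prop.~\ref{W-C-thm}, while $e_n^2 = e_n \cdot k = -1$, every class $D \in W_n e_n$ satisfies $D^2 = -1$ and $D \cdot k = -1$. The assignment $C \mapsto [C]$ is injective on exceptional curves of the first kind, because two distinct irreducible curves meet non-negatively, whereas $[C] = [C']$ would force $C \cdot C' = C^2 = -1$.

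For surjectivity onto $W_n e_n$ I would invoke Prop.~\ref{W-C-thm} together with the genericity principle stated after it. For $s \in W_n$ the class $s(e_n)$ is one of the exceptional-divisor classes of a blow-up at $s(x_1,\ldots,x_n)$, hence is represented there by a $(-1)$-curve. Applying the principle to the flat family given by the exceptional divisor $E_n$ as the points vary, a very general configuration admits, for every $s \in W_n$, a curve $C' \subset X_n$ isomorphic to $E_n$ (so smooth rational) of class $s(e_n)$; since $s$ preserves the form, $(C')^2 = e_n^2 = -1$, so $C'$ is an exceptional curve of the first kind. Thus every class of $W_n e_n$ is realized by such a curve.

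The substantial direction is to show that the class of an arbitrary exceptional curve of the first kind $C$ lies in $W_n e_n$, which I would prove by descent on the degree. Writing $[C] = d e_0 - \sum_i m_i e_i$, the multiplicities $m_i = C \cdot E_i \ge 0$ whenever $C \neq E_i$, and the constraints above read $\sum_i m_i = 3d-1$ and $\sum_i m_i^2 = d^2 + 1$. If $d = 0$ then $[C] = e_i$, already in the orbit via the transpositions $s_1,\ldots,s_{n-1}$; if $d = 1$ the constraints force $[C] = e_0 - e_i - e_j$, the strict transform of a line through two points, and $e_0 - e_1 - e_2 = s_n(e_3)$ shows this lies in $W_n e_n$ as well. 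For $d \ge 2$ I would reorder by $s_1,\ldots,s_{n-1}$ so that $m_1 \ge m_2 \ge m_3 \ge \cdots$ and apply $s_n$, which sends $[C]$ to a class of degree $d' = 2d - (m_1 + m_2 + m_3)$; geometrically $s_n$ is the quadratic Cremona transformation based at $x_1,x_2,x_3$, so the image of $C$ is again an irreducible exceptional curve whose class has non-negative multiplicities. Iterating strictly decreases $d$ and terminates at some $e_i$, placing $[C] \in W_n e_n$; combined with the two previous paragraphs this yields the asserted bijection.

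The hard part will be the inequality $m_1 + m_2 + m_3 > d$ that guarantees $d' < d$, i.e.\ that the descent genuinely terminates; this is Noether's inequality for the three largest multiplicities, and it must be extracted purely from the two identities $\sum_i m_i = 3d-1$, $\sum_i m_i^2 = d^2+1$ together with $m_1 \ge m_2 \ge \cdots \ge 0$. A second point requiring care, and where the general position of the $x_i$ is essential, is that the transform stays irreducible with non-negative multiplicities at each stage: one needs $d \ge m_1 + m_2$ (automatic for $d \ge 2$, since an irreducible curve of degree $\ge 2$ is not the line through $x_1,x_2$ and so meets it non-negatively), which is exactly what makes the new multiplicities $d - m_2 - m_3$, $d - m_1 - m_3$, $d - m_1 - m_2$ all $\ge 0$ and legitimizes the reordering before the next application of $s_n$.
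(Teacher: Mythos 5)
Your proposal is correct and takes essentially the route the paper intends: the paper itself only cites Dolgachev for this theorem, naming Noether's Inequality as the main tool, and your descent (reorder with $s_1,\ldots,s_{n-1}$, apply $s_n$, decrease the degree, combined with the Weyl-group genericity principle for realizing and transporting curves) is precisely the strategy the paper carries out for the analogous Prop.~\ref{self-0-prop} on self-intersection-$0$ classes. The one step you defer, $m_1+m_2+m_3>d$, need not be re-extracted from the numerical identities $\sum_i m_i = 3d-1$, $\sum_i m_i^2 = d^2+1$: it is exactly the paper's quoted Lemma (Noether's Inequality), whose hypotheses ($\overline{C}^2=-1\in[-2,1]$, $m_2>0$, and at least three positive multiplicities, the latter two following for $d\geq 2$ from $m_1+m_2\leq d$ as in your last paragraph) your setup already verifies.
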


\noindent The main tool to prove this theorem is Noether's Inequality:
\begin{lem}[{\cite[p.288]{Dol83}}]
Let $C$ be an irreducible curve of degree $d$ on $\mathbb{P}^2$ passing through points $x_1, \ldots, x_n$ with multiplicities $m_1 \geq  \cdots \geq m_n$, $n \geq 3$. Assume that $m_2 > 0$ and that the strict transform $\overline{C}$ of $C$ on the blow up $X_n$ of $\mathbb{P}^2$ in $x_1, \ldots, x_n$ is a nonsingular rational curve with $-2 \leq \overline{C}^2 \leq 1$. Then:
\[ d < m_1 + m_2 + m_3. \] 
\end{lem}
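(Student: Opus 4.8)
The plan is to convert every hypothesis into a numerical statement about the class of the strict transform and then run an elementary extremal argument. Write $[\overline{C}] = d\,e_0 - \sum_{i=1}^n m_i e_i$ in $A^1(X_n)$, so that by the intersection rules $e_0^2 = 1$, $e_i^2 = -1$ and $e_i \cdot e_j = 0$ otherwise one has $\overline{C}^2 = d^2 - \sum m_i^2$ and $\overline{C} \cdot k = -3d + \sum m_i$. Since $\overline{C}$ is a nonsingular rational curve, adjunction gives $\overline{C}^2 + \overline{C}\cdot k = -2$, which after substitution becomes the genus relation $\sum_{i=1}^n m_i^2 - \sum_{i=1}^n m_i = (d-1)(d-2)$. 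Feeding this back into $\overline{C}^2 = d^2 - \sum m_i^2$ turns the hypothesis $-2 \le \overline{C}^2 \le 1$ into the two-sided bound $3d - 3 \le \sum_{i=1}^n m_i \le 3d$ on the total multiplicity.

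First I would dispose of the degenerate configurations. If $d = 1$ the statement is immediate. If $d \ge 2$ then $C$ is irreducible and distinct from the line $L$ through $x_1$ and $x_2$ (which exists since $m_2 > 0$); because the points lie in general position $L$ meets none of the other $x_i$, so $\overline{L}$ has class $e_0 - e_1 - e_2$ and Bezout gives $\overline{C}\cdot\overline{L} = d - m_1 - m_2 \ge 0$, i.e. $m_1 + m_2 \le d$. Combined with the lower bound $\sum m_i \ge 3d - 3$ this already forces $m_3 > 0$ whenever $d \ge 2$: otherwise $\sum m_i = m_1 + m_2 \le d < 3d - 3$. Thus from now on $m_3 \ge 1$.

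The heart of the argument is then a contradiction. Suppose $m_1 + m_2 + m_3 \le d$. Because $m_i \le m_3$ for every $i \ge 3$ we get the tail estimate $\sum_{i \ge 3} m_i^2 \le m_3 \sum_{i\ge3} m_i = m_3(S - m_1 - m_2)$, where $S = \sum_i m_i$. Substituting into the genus relation $\sum m_i^2 = S + (d-1)(d-2)$ and using $m_3 \ge 1$ together with $S \le 3d$ to estimate the term $(m_3 - 1)S$, the inequality collapses to $f \ge d^2 + 2$, where $f := m_1^2 + m_2^2 - m_3(m_1 + m_2) + 3d\,m_3$. On the other hand, an elementary optimization of $f$ over the region $\{m_1 \ge m_2 \ge m_3 \ge 1,\ m_1 + m_2 + m_3 \le d\}$ --- maximize $m_1^2 + m_2^2$ for fixed $m_1 + m_2$ and fixed $m_3$, then optimize in $m_1 + m_2$, then in $m_3$ --- shows that $f \le d^2$ throughout. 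The two bounds contradict each other, so $m_1 + m_2 + m_3 > d$.

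The main obstacle is this last optimization: $f$ is an indefinite quadratic, so its maximum need not sit at a vertex of the polytope, and one has to carry out the reduction on the constrained boundary carefully (the extremum turns out to occur along $m_1 + m_2 + m_3 = d$ with $m_2 = m_3$, at the fully balanced point $m_1=m_2=m_3=d/3$). The only geometric inputs beyond pure bookkeeping are adjunction for the smooth rational curve and the single application of Bezout, where the general position of the $x_i$ is exactly what guarantees that $\overline{L}$ has class $e_0 - e_1 - e_2$ and hence that the Bezout bound reads $m_1 + m_2 \le d$ rather than something weaker.
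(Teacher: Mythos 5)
The paper offers no proof of this lemma at all: it is Noether's inequality, quoted with the citation \cite[p.288]{Dol83}, so your argument can only be measured against the classical proof --- and it is, in essence, that proof, carried out correctly. Your translation of the hypotheses is right: adjunction for the smooth rational curve gives $\sum m_i^2 - \sum m_i = (d-1)(d-2)$, and $-2 \le \overline{C}^2 \le 1$ becomes $3d-3 \le S \le 3d$ for $S = \sum m_i$ (since $\overline{C}\cdot k = -2 - \overline{C}^2 \in [-3,0]$). The reduction to $m_3 \ge 1$ for $d \ge 2$ works, and I verified the two nontrivial claims in your contradiction argument. First, assuming $m_1+m_2+m_3 \le d$, the tail estimate $\sum_{i \ge 3} m_i^2 \le m_3(S - m_1 - m_2)$ combined with the genus relation and $(m_3-1)S \le 3d(m_3-1)$ indeed yields $f \ge (d-1)(d-2) + 3d = d^2+2$ for $f = m_1^2 + m_2^2 - m_3(m_1+m_2) + 3dm_3$. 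Second, the constrained maximum: since $\partial f/\partial m_3 = 3d - (m_1+m_2) > 0$, the maximum lies on $m_3 = \min(m_2,\, d-m_1-m_2)$; pushing $m_2$ down to $m_3$ and setting $s = m_1+m_2$ reduces $f$ on the face $m_1+m_2+m_3 = d$ to $6s^2 - 10ds + 5d^2$ on $s \in [2d/3,\, d-1]$, a convex function with endpoint values $d^2$ (at your balanced point $m_1=m_2=m_3=d/3$) and $d^2-2d+6$, so $\max f = d^2$ for $d \ge 3$; for $d \le 2$ the region $\{m_1 \ge m_2 \ge m_3 \ge 1,\ m_1+m_2+m_3 \le d\}$ is empty, so your ``$f \le d^2$ throughout'' holds vacuously and the contradiction is immediate anyway ($m_1+m_2+m_3 \ge 3 > d$). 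Thus $d^2+2 \le f \le d^2$ is a genuine contradiction and the proof closes.

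Two small corrections to the write-up. The appeal to general position in the Bezout step is both unavailable and unnecessary: the lemma as stated makes no genericity assumption on $x_1, \ldots, x_n$ (the paper only later applies it to points in very general position). You do not need it: for $d \ge 2$ the irreducible curves $\overline{C}$ and $\overline{L}$ are distinct, the strict transform of $L$ has class $e_0 - \sum_{x_i \in L} e_i$, and so $0 \le \overline{C}\cdot\overline{L} = d - \sum_{x_i \in L} m_i \le d - m_1 - m_2$, which is all you use --- extra points of $L$ among the $x_i$ only strengthen the bound. Also, since the $m_i$ are integers you are really maximizing $f$ over the real relaxation of the constraint region; that is harmless, as only an upper bound is needed, but worth saying explicitly. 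With the case analysis above spelled out, your optimization finish is complete; Dolgachev's own ending is a shorter direct manipulation of the same two identities, so the difference is one of bookkeeping style rather than substance.
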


\noindent Noether's Inequality can also be used to classify the classes of nonsingular rational curves $C \subset X_n$ with $C^2 = 0$:
\begin{prop} \label{self-0-prop}
There is a bijection between the classes of nonsingular rational curves $C$ with $C^2 = 0$ and the orbit $W_n (e_0-e_1)$.
\end{prop}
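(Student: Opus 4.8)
The plan is to establish the set equality
\[ \{\, [C] : C \subset X_n \text{ a nonsingular rational curve with } C^2 = 0 \,\} = W_n(e_0 - e_1), \]
from which the asserted bijection (the identity on classes) is immediate. I write a class as $[C] = d\,e_0 - \sum_{i=1}^n m_i e_i$, so that $d = [C]\cdot e_0$ is the degree of the image in $\mathbb{P}^2$ and $m_i = [C]\cdot e_i$ are the multiplicities at the $x_i$. Two invariants drive the argument: since a nonsingular rational curve is $\cong \mathbb{P}^1$ and hence irreducible of genus $0$, adjunction gives $C\cdot(C+k) = -2$, so $C^2 = 0$ forces $C\cdot k = -2$, i.e. $\sum_i m_i = 3d-2$; and $C^2 = 0$ reads $\sum_i m_i^2 = d^2$. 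Both $C^2$ and $C\cdot k$ are preserved by $W_n$ (Proposition~\ref{W-C-thm}), and one checks directly that $(e_0-e_1)^2 = 0$ and $(e_0-e_1)\cdot k = -2$, so the two sides at least consist of classes with the same numerical invariants.

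For the inclusion $W_n(e_0-e_1) \subseteq \{[C]\}$, observe first that $e_0-e_1$ is the class of the strict transform of a line through $x_1$; such a strict transform is a $\mathbb{P}^1$ of self-intersection $0$, so $e_0-e_1$ is realised (indeed by an entire pencil). For arbitrary $s \in W_n$, the principle following Proposition~\ref{W-C-thm} produces, for very general $x_1,\dots,x_n$, a curve $C' \subset X_n$ isomorphic to this line and of class $s(e_0-e_1)$; being isomorphic to $\mathbb{P}^1$ it is nonsingular rational, and $W_n$-invariance of the intersection form gives $C'^2 = (e_0-e_1)^2 = 0$. Hence every class in the orbit occurs.

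The substantial inclusion is $\{[C]\} \subseteq W_n(e_0-e_1)$, and here I would run the same Cremona reduction that proves Theorem~\ref{self--1-thm}. The essential observation is that the Noether Inequality quoted above is valid for the whole range $-2 \leq \bar C^2 \leq 1$, which contains the value $\bar C^2 = 0$ relevant to us, so the reduction for $(-1)$-curves transfers verbatim. Concretely, reorder the $m_i$ into $m_1 \geq \dots \geq m_n$ using the reflections $s_1,\dots,s_{n-1}$. If $m_2 > 0$, Noether gives $d < m_1+m_2+m_3$, and applying the quadratic reflection $s_n$ replaces $d$ by $d' = 2d - m_1 - m_2 - m_3 < d$. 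Since the classes stay effective (property (iii) of a Cremona isometry) and $e_0$ is nef, the degree remains a non-negative integer, so after finitely many steps the process halts at a class $d\,e_0 - m_1 e_1$ with $m_2 = 0$. There the two invariants force $d^2 = m_1^2$ and $3d - m_1 = 2$, whence $d = m_1 = 1$ and the class equals $e_0 - e_1$. Tracking the applied $W_n$-elements shows $[C] \in W_n(e_0-e_1)$.

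The main obstacle is to justify that Noether's Inequality may indeed be invoked at \emph{every} step of the reduction. This requires knowing that each intermediate class is genuinely represented by an irreducible nonsingular rational curve, so that the hypotheses $m_2 > 0$ and $-2 \leq \bar C^2 \leq 1$ can be read off — which is precisely what Proposition~\ref{W-C-thm} supplies: each $s([C])$ is the class of a curve isomorphic to $C$ on the blow-up at the transformed points, hence (for very general $x_i$) on $X_n$ itself, with the same genus $0$ and self-intersection $0$. Two smaller points need attention: the quadratic reflection $s_n$ exists only for $n \geq 3$, so the cases $n \leq 2$ must be settled by the direct numerical analysis above, which yields exactly the classes $e_0 - e_i$ — precisely the $W_n$-orbit of $e_0-e_1$ under the available permutation reflections; and, as in the existence direction, the passage from ``general'' to ``very general'' position is what makes the countable family of Weyl group elements simultaneously effective. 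Once these are in place, the reduction terminates at $e_0 - e_1$ and the two inclusions give the claimed bijection.
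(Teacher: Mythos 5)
Your proposal is correct and follows essentially the same route as the paper: the Weyl-group transfer principle after Prop.~\ref{W-C-thm} gives the easy inclusion and guarantees that every intermediate class in the descent is still represented by a nonsingular rational curve of self-intersection $0$, while Noether's Inequality combined with the quadratic reflection $s_n$ strictly decreases the degree until $m_2=0$. The only (harmless) deviation is that you close the terminal cases numerically via the adjunction invariant $C \cdot k = -2$, whereas the paper excludes the two-point case $n^\prime = 2$ geometrically by intersecting with the line through $x_1, x_2$ and settles $n^\prime = 1$ using that a nonsingular curve passes through a point with multiplicity at most one.
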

\begin{proof}
The strict transform on $X_n$ of a line $L \subset \mathbb{P}^2$ running through $x_1$ but not through $x_2, \ldots, x_n$ represents the class $e_0 - e_1$. The principle above implies that for all $s \in W_n$, $s(e_0-e_1)$ is also represented by a nonsingular rational curve with self-intersection $0$. Hence $W_n (e_0-e_1)$ is injected into the set of all classes of such curves.

\noindent Vice versa, let $C$ be a nonsingular rational curve on $X_n(x_1, \ldots, x_n)$ with self-intersection $0$ and let $de_0-m_1e_1 - \cdots - m_n e_n$ be its class. Since $(x_1, \ldots, x_n)$ is assumed to be very general there is a whole family of such curves in $X_n(x_1^\prime, \ldots, x_n^\prime)$ where $(x_1^\prime, \ldots, x_n^\prime)$ varies in a Zariski-open subset of $(\mathbb{P}^2)^n$. Hence we can use the principle above, and iteratively applying the simple reflections $s_1, \ldots, s_{n-1} \in W_n$ we obtain a nonsingular rational curve on $X_n(x_1, \ldots, x_n)$  with self-intersection $0$ representing $de_0-m_1e_1 - \cdots - m_{n^\prime} e_{n^\prime}$ with $n^\prime \leq n$ and $m_1 \geq \ldots \geq m_{n^\prime} > 0$. We distinguish three cases:
\begin{itemize}
\item $n^\prime=1$: Then $(de_0 - m_1e_1)^2 = d^2 - m_1^2 = 0$ only if $d = m_1$. Since a nonsingular curve only vanishes with multiplicity $1$ in a point, $C$ must be of class $e_0 - e_1$.
\item $n^\prime = 2$: $C$ cannot be the strict transform of the line $L$ through $x_1$ and $x_2$ since $(e_0 - e_1 -e_2)^2 = -1$. Then $0 \leq \pi_n(C) \cdot L = d - m_1 - m_2$ implies $m_1 + m_2 \leq d$, hence $m_1^2 + m_2^2 < (m_1 + m_2)^2 \leq d^2$. Consequently, $(de_0 - m_1e_1 - m_2e_2)^2 = 0$ is impossible.  
\item $n^\prime \geq 3$: We can apply Noether's Inequality and conclude $d < m_1 + m_2 + m_3$. Thus applying the simple reflection $s_n$ (and the principle above) yields a curve whose class has coefficient $2d - m_1 - m_2 - m_3 < d$ for $e_0$. The claim follows by induction.
\end{itemize} 
\end{proof}

\noindent We are ready to prove the characterisation of the cone $\mathrm{Big}(X_n)$ (also denoted by $\overline{NE}(X_n)$ in the literature) for $0 \leq n \leq 8$, using Mori theory. The main point is that all these surfaces are del Pezzo (or Fano), that is, $-K_{X_n}$ is ample for $0 \leq n \leq 8$ (\cite[Thm.24.4]{M74}).
\begin{thm} \label{Big-Xn-thm}
If $n=0$, that is $X_0 = \mathbb{P}^2$, then $\mathrm{Big}(X_0) = \mathbb{R}_+ \cdot [L]$.

\noindent If $n=1$ then $\mathrm{Big}(X_1) = \mathbb{R}_+ \cdot [E_1] + \mathbb{R}_+ \cdot ([\pi_1^\ast L] - [E_1])$.

\noindent If $n \geq 2$ then $\mathrm{Big}(X_n)$ is generated (as a convex cone) by the finitely many classes of exceptional curves of the first kind on $X_n$. 

\noindent For $1 \leq n \leq 8$ the classes of exceptional curves of the first kind are given by 
\[ \begin{array}{c|cccccccc|c}
   d & m_1 & m_2 & m_3 & m_4 & m_5 & m_6 & m_7 & m_8 & \# (n=8) \\ \hline
   0 & -1 & & & & & & & & 8 \\
   1 & 1 & 1 & & & & & & & 28 \\
   2 & 1 & 1 & 1 & 1 & 1 & & & & 56 \\
   3 & 2 & 1 & 1 & 1 & 1 & 1 & 1 & & 56 \\
   4 & 2 & 2 & 2 & 1 & 1 & 1 & 1 & 1 & 56 \\
   5 & 2 & 2 & 2 & 2 & 2 & 2 & 1 & 1 & 28 \\
   6 & 3 & 2 & 2 & 2 & 2 & 2 & 2 & 2 & 8
    \end{array} \] 
This table should be read as follows: For fixed $n$ between $1$ and $8$ only those rows in the table matter where $m_{n+1} = \cdots = m_8 = 0$. In each row there is exactly one exceptional line for each permutation of $(m_1, \ldots, m_n)$, and the number of all such permutations is recorded in the last column.
\end{thm}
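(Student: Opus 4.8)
The plan is to identify the big cone with the Mori cone of curves and then run the Cone Theorem, exploiting that $X_n$ is del Pezzo for $0\le n\le 8$. On a smooth projective surface the intersection form identifies $N^1(X_n)_{\mathbb{R}}$ with $N_1(X_n)_{\mathbb{R}}$, and under this identification the closure of the big cone is the pseudo-effective cone $\overline{\mathrm{Eff}}(X_n)$, which coincides with $\overline{NE}(X_n)$ because effective divisors and effective curves are literally the same cycles. So it suffices to describe $\overline{NE}(X_n)$, and since a cone generated by finitely many rays is already closed, the two formulations of the statement agree.

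For $n=0$ we have $\rho(\mathbb{P}^2)=1$ and the claim is immediate. For $n\ge 1$, since $-K_{X_n}$ is ample (\cite[Thm.24.4]{M74}), the Cone Theorem presents $\overline{NE}(X_n)$ as a rational polyhedral cone generated by finitely many extremal rays, each spanned by an irreducible rational curve $C$ with $0<-K_{X_n}\cdot C\le 3$. By adjunction $C^2=-2-K_{X_n}\cdot C$, so $C^2\in\{-1,0,1\}$ according as $-K_{X_n}\cdot C\in\{1,2,3\}$. The heart of the argument is to pin down which of these occur. An elementary ($K$-negative extremal) contraction of a smooth projective surface is one of exactly three types: contraction to a point, which forces $\rho=1$; a conic bundle over a curve, whose fibre class has $C^2=0$ and which forces relative Picard number one, i.e. $\rho=2$; or the blow-down of a single $(-1)$-curve, with $C^2=-1$. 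Since $\rho(X_n)=n+1$, for $n\ge 2$ the first two types are excluded and every extremal ray is spanned by an exceptional curve of the first kind; for $n=1$ (so $\rho=2$) the conic-bundle ray $\pi_1^\ast L-E_1$ with $C^2=0$ also appears, yielding precisely the two stated generators.

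I would then invoke Theorem~\ref{self--1-thm}: the exceptional curves of the first kind are in bijection with the finite orbit $W_n e_n$, so for $n\ge 2$ the cone $\overline{NE}(X_n)=\mathrm{Big}(X_n)$ is generated as a convex cone by these finitely many $(-1)$-classes. It remains to list them. A class $de_0-\sum_i m_ie_i$ is a $(-1)$-class exactly when
\[ C^2=d^2-\sum_i m_i^2=-1 \quad\text{and}\quad -K_{X_n}\cdot C=3d-\sum_i m_i=1. \]
I would solve this finite Diophantine system up to permutation of the $m_i$: Cauchy--Schwarz gives $(3d-1)^2\le 8(d^2+1)$, hence $d\le 7$, and a direct check rules out $d=7$, leaving the rows $0\le d\le 6$ of the table, with the last column recording the number of permutations of $(m_1,\dots,m_n)$ for $n=8$. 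Finally, that each such numerical solution is genuinely realized by an irreducible $(-1)$-curve follows from Theorem~\ref{self--1-thm}, once one checks that the numerical $(-1)$-classes form a single $W_n$-orbit, namely $W_n e_n$.

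The main obstacle is the classification of the extremal rays for $n\ge 2$ — in particular ruling out the $C^2=0$ (conic-bundle) rays via the Picard-number argument — together with the verification that the numerically defined $(-1)$-classes coincide with the geometrically realized orbit $W_n e_n$; the enumeration producing the table is then routine.
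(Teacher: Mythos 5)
Your proposal is correct, and it reaches the paper's conclusion by a genuinely different route at the two points where real work is needed. For the exclusion of non-$(-1)$ extremal rays when $n\geq 2$, the paper argues via the classification of extremal rays together with Prop.~\ref{self-0-prop}: it first classifies the classes of nonsingular rational curves with $C^2=0$ as the Weyl orbit $W_n(e_0-e_1)$ (using Noether's inequality and the very-general-position principle), and then observes that $e_0-e_1=(e_0-e_1-e_2)+e_2$ decomposes into $(-1)$-classes, so no such curve can span an extremal ray. You instead kill the $C^2=0$ rays by the Picard-number rigidity of extremal contractions (contraction to a point forces $\rho=1$, a conic bundle forces $\rho=2$, while $\rho(X_n)=n+1\geq 3$); this is cleaner, makes Prop.~\ref{self-0-prop} unnecessary for this theorem, and treats $n=1$ uniformly, whereas the paper verifies the two rays $[E_1]$ and $[\pi_1^\ast L-E_1]$ by direct intersection computations. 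For the table, the paper simply computes the orbit $W_ne_1$, citing Manin; you instead enumerate the numerical solutions of $C^2=-1$, $-K\cdot C=1$, with the Cauchy--Schwarz bound $(3d-1)^2\leq 8(d^2+1)$ giving $d\leq 7$ and the minimal integral square sum $4\cdot 9+4\cdot 4=52>50$ ruling out $d=7$ --- this is fine, and arguably more self-contained than the paper's citation. The one step you defer, namely that the numerical $(-1)$-classes form the single orbit $W_ne_n$ so that Thm.~\ref{self--1-thm} guarantees each row is realized by an actual irreducible curve, is a genuine obligation (a numerical class need not a priori be a curve class), but it is the standard descent via the reflection $s_n$ using $m_1+m_2+m_3>d$; it is precisely the combinatorial content the paper itself outsources to Manin, so your account is complete at the same level of rigor as the paper's.
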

\begin{proof}
The case $n=0$ is obvious. 

\noindent If $n=1$ then $E_1^2 = -1, E_1 \cdot K_{X_1} = -1$ show that $[E_1]$ generates an extremal ray, by the classification of extremal rays (\cite[Thm.1.4.8]{Mat02}). Furthermore the linear system $|\pi_1^\ast L - E_1|$ induces the projection of $X_1$ to $\mathbb{P}^1$, with fibers $\cong \mathbb{P}^1$, and $(\pi_1^\ast L - E_1) \cdot K_{X_1} = -2$ shows that $[\pi_1^\ast L - E_1]$ generates an extremal ray, again by the classification of extremal rays. Since $-K_{X_1}$ is ample, the Cone Theorem \cite[Thm.1.3.1]{Mat02} implies that $\mathrm{Big}(X_1)$ is generated (as a convex cone) by finitely many extremal rays. Since $\dim_{\mathbb{R}} A^1(X_1)_{\mathbb{R}} = 2$ there cannot be more than $2$ extremal rays in this case.

\noindent If $2 \leq n \leq 8$ nonsingular rational curves of self-intersection $0$ on $X_n$ represent classes in $W_n(e_0-e_1)$, by Prop.~\ref{self-0-prop}. But $e_0 - e_1 = (e_0 - e_1 - e_2) + e_2$ is a sum of exceptional curves of the first kind, hence this class does not generate an extremal ray of $\mathrm{Big}(X)$. Since exceptional curves of the first kind represent all classes in $W_n e_1$ by Thm.~\ref{self--1-thm} it follows that a nonsingular rational curve of self-intersection $0$ cannot be an extremal ray of $\mathrm{Big}(X_n)$. Consequently, the only possible extremal rays of $\mathrm{Big}(X_n)$ are exceptional curves of the first kind, by the classification of extremal rays, and since $-K_{X_n}$ is ample, they generate $\mathrm{Big}(X_n)$ by the Cone Theorem.

\noindent The table is obtained by calculating the orbit $W_n e_1$ (see~\cite[p.135]{M74}).
\end{proof}

\noindent This characterisation of the big cone of $X_n$ can be used to calculate multipoint Seshadri constants on $\mathbb{P}^2$:
\begin{cor} \label{Sesh-cor}
Let $\epsilon_n := \epsilon_{\mathbb{P}^2}(L; x_1, \ldots, x_n) := \sup \{ t > 0 | \pi_n^\ast L - t \cdot \sum_{i=1}^n E_i\ \mathrm{is\ ample} \}$ denote the $n$-point Seshadri constant of the divisor $L$ on $\mathbb{P}^2$. For $1 \leq n \leq 9$ it is given by the following table:
\[ \begin{array}{c||c|c|c|c|c|c|c|c|c}
    n & 1 & 2 & 3 & 4 & 5 & 6 & 7 & 8 & 9 \\ \hline
    \epsilon_n & 1 & \frac{1}{2} & \frac{1}{2} & \frac{1}{2} &  \frac{2}{5} & \frac{2}{5} & \frac{3}{8} & \frac{6}{17} & \frac{1}{3} 
    \end{array}\]
\end{cor}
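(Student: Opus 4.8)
The plan is to translate the ampleness of $D_t := \pi_n^\ast L - t\sum_{i=1}^n E_i = e_0 - t\sum_{i=1}^n e_i$ into a family of intersection inequalities and then extract $\epsilon_n$ as an infimum of slopes. Since the intersection form satisfies $e_0^2 = 1$, $e_i^2 = -1$ and all mixed products vanish, for a curve class $[C] = d e_0 - \sum_i m_i e_i$ one computes $D_t \cdot C = d - t\sum_i m_i$. Hence $D_t$ fails to be positive against $C$ exactly when $t \ge d/\sum_i m_i$ (for $\sum_i m_i > 0$), and I expect the Seshadri constant to emerge as the infimum of the ratios $d/\sum_i m_i$ over the curves that generate the relevant cone.

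For $1 \le n \le 8$ the surface $X_n$ is del Pezzo, so by Theorem~\ref{Big-Xn-thm} the cone $\overline{NE}(X_n)=\mathrm{Big}(X_n)$ is generated by finitely many classes: the two classes $e_1$ and $e_0 - e_1$ when $n = 1$, and the exceptional curves of the first kind tabulated there when $2 \le n \le 8$. By Kleiman's criterion $D_t$ is ample if and only if it is strictly positive on each generator; the exceptional divisors $E_i$ (class $e_i$, $d=0$) give $D_t \cdot E_i = t > 0$ automatically, so only the generators with $d \ge 1$ are binding. Thus $\epsilon_n = \min\, d/\sum_i m_i$, the minimum running over generators with $d \ge 1$ and $m_{n+1} = \dots = m_8 = 0$. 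For $n = 1$ the only such generator is $e_0 - e_1$, giving $\epsilon_1 = 1$; for $2 \le n \le 8$ one reads the admissible rows off the table and computes the finitely many ratios $\tfrac12, \tfrac25, \tfrac38, \tfrac{4}{11}, \tfrac{5}{14}, \tfrac{6}{17}$ (for $d = 1,\dots,6$), whose minima over the rows available for each $n$ reproduce the stated values — for instance when $n = 8$ all six ratios occur and $\tfrac{6}{17}$ is the smallest.

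The case $n = 9$ is the main obstacle, because $X_9$ is no longer del Pezzo and Theorem~\ref{Big-Xn-thm} does not apply: the Mori cone is not known to be finitely generated, so I cannot simply test positivity against finitely many generators. Here I would instead invoke the Nakai–Moishezon criterion on a surface, which requires $D_t^2 > 0$ together with $D_t \cdot C > 0$ for every irreducible curve $C$. The self-intersection $D_t^2 = 1 - 9t^2$ is positive exactly for $t < \tfrac13$, which already yields $\epsilon_9 \le \tfrac13$. For the reverse direction I would use that the unique plane cubic through nine very general points gives an irreducible curve $\Gamma \in |-K_{X_9}|$ with $\Gamma^2 = 0$, so that $-K_{X_9} = 3e_0 - \sum_{i=1}^9 e_i$ is nef; intersecting with any irreducible $C = d e_0 - \sum_i m_i e_i$ then forces $\sum_i m_i \le 3d$, whence for $t < \tfrac13$ we get $D_t \cdot C = d - t\sum_i m_i > d - \tfrac13\cdot 3d = 0$ (and $D_t \cdot E_i = t > 0$). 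Therefore $D_t$ is ample for every $t < \tfrac13$, while $D_{1/3}^2 = 0$ shows it is not ample at $t = \tfrac13$, giving $\epsilon_9 = \tfrac13$. The only delicate point to pin down is the nefness of $-K_{X_9}$ at very general points, which I would deduce from the irreducibility of the anticanonical cubic together with $\Gamma^2 = 0$.
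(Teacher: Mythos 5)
Your treatment of $1 \leq n \leq 8$ coincides with the paper's proof: Theorem~\ref{Big-Xn-thm} plus Kleiman's criterion reduces ampleness of $D_t = \pi_n^\ast L - t\sum_{i=1}^n E_i$ to strict positivity against the finitely many generators of the cone, the classes $e_i$ being harmless, and the tabulated values are exactly the minima of the ratios $d/\sum_i m_i$ over the admissible rows --- your list $\tfrac12, \tfrac25, \tfrac38, \tfrac{4}{11}, \tfrac{5}{14}, \tfrac{6}{17}$ and the resulting minima per $n$ all check out (e.g.\ $\tfrac{6}{17} < \tfrac{5}{14} < \tfrac{4}{11} < \tfrac38$ for $n=8$). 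Where you genuinely diverge is $n=9$: the paper disposes of this case in one line by citing the fact that Nagata's Conjecture holds for square numbers \cite[Rem.5.1.14]{LazPAG1}, whereas you give a direct, self-contained argument, and it is correct. For nine very general points the unique cubic through them is irreducible (indeed smooth: a dimension count shows tuples lying on singular cubics fill only a $17$-dimensional locus in the $18$-dimensional $(\mathbb{P}^2)^9$), its strict transform $\Gamma \in |-K_{X_9}|$ satisfies $\Gamma^2 = 0$, and an irreducible curve of nonnegative self-intersection is nef; nefness of $-K_{X_9}$ then forces $\sum_i m_i \leq 3d$ for every irreducible curve, so $D_t \cdot C \geq d(1-3t) > 0$ and $D_t^2 = 1 - 9t^2 > 0$ for $t < \tfrac13$, while $D_{1/3}^2 = 0$ excludes ampleness at and beyond $\tfrac13$. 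What your route buys is independence from the external citation (and it makes visible the geometric source of the bound, which for $n=9$ is precisely this anticanonical elliptic curve); what the paper's citation buys is brevity. One small point worth recording: for irrational $t$ the divisor $D_t$ is an $\mathbb{R}$-divisor, so you should either invoke the $\mathbb{R}$-divisor version of Nakai--Moishezon (Campana--Peternell) or simply note that verifying ampleness for all rational $t < \tfrac13$ already forces the supremum defining $\epsilon_9$ to equal $\tfrac13$.
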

\begin{proof}
As shown in Thm.~\ref{Big-Xn-thm} the convex cone $\mathrm{Big}(X_n)$ is generated by finitely many extremal rays for $1 \leq n \leq 8$. Kleiman's Criterion \cite[Thm.1.4.29]{LazPAG1} tells us that the Seshadri constant $\epsilon_n$ is the maximal number $t$ such that the intersection of $\pi_n^\ast L - t \cdot \sum_{i=1}^n E_i$ with all exceptional curves of the first kind (and $\pi_1^\ast L - E_1$ if $n=1$) is non-negative. This maximum can be read off the table in \ref{Big-Xn-thm}.

\noindent $\epsilon_9 = \frac{1}{3}$ holds because Nagata's Conjecture is true for square numbers \cite[Rem.5.1.14]{LazPAG1}.
\end{proof}

\begin{conj}[Nagata] \label{Nag-conj}
For $n > 9$ we have $\epsilon_n = 1/\sqrt{n}$.
\end{conj}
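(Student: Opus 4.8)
\emph{Remark on the status of Conjecture~\ref{Nag-conj}.} The plan is to separate the two inequalities hidden in Conjecture~\ref{Nag-conj}, since only one of them is within reach of the methods of this paper. For the \emph{upper} bound, write $D_t := \pi_n^\ast L - t\sum_{i=1}^n E_i$ and compute, from $(\pi_n^\ast L)^2 = 1$, $E_i^2 = -1$ and $\pi_n^\ast L \cdot E_i = E_i \cdot E_j = 0$ $(i \neq j)$, that $D_t^2 = 1 - n t^2$. An ample $\mathbb{R}$-divisor has positive self-intersection, so $D_t$ fails to be ample once $t \geq 1/\sqrt n$; hence $\epsilon_n \leq 1/\sqrt n$ for every $n$, unconditionally. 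By Kleiman's Criterion the reverse inequality $\epsilon_n \geq 1/\sqrt n$ --- the real content of the conjecture --- is equivalent to saying that every irreducible curve $C \subset X_n$, with $\pi_n(C)$ of degree $d$ and multiplicities $m_i$ at $x_i$ so that $[C] = d\,e_0 - \sum_i m_i e_i$, obeys Nagata's inequality $\sqrt n\, d \geq \sum_{i=1}^n m_i$.

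The natural first attempt is to copy the proof of Thm.~\ref{Big-Xn-thm}: for $0 \leq n \leq 8$ the surface $X_n$ is del Pezzo, $-K_{X_n}$ is ample, the Cone Theorem makes $\mathrm{Big}(X_n)$ finitely generated by exceptional curves of the first kind, and the Seshadri bound is then read off the finite table, exactly as in Cor.~\ref{Sesh-cor}. This argument collapses for $n > 9$. Already at $n = 9$ the class $-K_{X_9}$ is only nef, the Weyl group $W_9$ is of infinite (affine $\tilde E_8$) type, and by Thm.~\ref{self--1-thm} the orbit $W_9 e_9$, i.e.\ the set of $(-1)$-curves, is infinite; for $n > 9$ one has $K_{X_n}^2 = 9 - n < 0$, $-K_{X_n}$ is not even nef, $W_n$ is a hyperbolic reflection group, and infinitely many curve classes become candidates for violating Nagata's inequality. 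The finiteness and the positivity of $-K$ that powered the del Pezzo case are both lost, so Mori theory no longer reduces the question to a finite check.

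What remains are methods that directly construct or exclude the hypothetical low-degree, high-multiplicity ``Nagata curves''. The most effective is \emph{specialization}: degenerate the very general configuration $(x_1,\ldots,x_n)$ to a special one on which the relevant linear systems can be computed, then use semicontinuity of the ratio $d/\sum_i m_i$ to carry an estimate back to the generic fibre. This is precisely how Nagata proved the case of a perfect square, which is what Cor.~\ref{Sesh-cor} quotes to record $\epsilon_9 = \tfrac13$, the extra symmetry of a square grid making the bound sharp. Failing that, one could try to deduce the conjecture from the stronger SHGH statement on the dimensions of $|d\,\pi_n^\ast L - m\sum_i E_i|$, or from the limit-linear-system degenerations of Ciliberto and Miranda; but these are at least as hard and themselves open in general.

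The main obstacle is thus squarely the lower bound, and it is not a technical gap but the open conjecture itself: no present technique controls the infinitely many candidate curve classes simultaneously for non-square $n$. In particular the Newton--Okounkov machinery of Section~\ref{OB-ZD-sec} cannot help here, because its logic runs the opposite way --- the shape of $\Delta_{Y_\bullet}(D)$ is \emph{deduced from} the structure of $\mathrm{Big}(X_n)$, hence from the Seshadri bound, not conversely. For this reason I would not attempt an unconditional proof; instead the sensible course, and the one taken in the sequel, is to adopt Conjecture~\ref{Nag-conj} as a hypothesis and read off the resulting form of the Okounkov bodies for $n > 9$.
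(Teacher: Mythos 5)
This statement is Nagata's open conjecture, which the paper states without proof and later adopts only as a hypothesis; you correctly recognize this and take exactly the same course, so there is nothing to fault. Your supporting observations --- the unconditional upper bound $\epsilon_n \leq 1/\sqrt{n}$ from $D_t^2 = 1 - nt^2$, the Kleiman reduction of the lower bound to the inequality $\sqrt{n}\,d \geq \sum_{i=1}^n m_i$ for all irreducible curves, and the explanation of why the del Pezzo/Cone Theorem argument of Thm.~\ref{Big-Xn-thm} breaks down for $n \geq 9$ --- are all accurate.
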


\section{Calculation of Okounkov bodies} \label{calc-sec}

\noindent Let $L \subset \mathbb{P}^2$ be a line and $y \in L$ a point, let $\pi_n: X_n \rightarrow \mathbb{P}^2$ be the blow up of $\mathbb{P}^2$ in $n$ points $x_1, \ldots, x_n$ in very general position, and let $E_i = \pi_n^{-1}(x_i)$ denote the exceptional line over $x_i$. Then for positive integers $d, m$ we consider divisors of the form
\[ L_{n,d,m} = d\pi_n^\ast L - m \cdot \sum_{i=1}^k E_i \]
and study the Okounkov bodies $\Delta_{Y_\bullet}(L_{n,d,m}) := \Delta_{Y_\bullet}(V_\bullet^{(n,d,m)})$ associated to the complete linear series
\[ V_\bullet^{(n,d,m)} = \{V_k^{(n,d,m)}\}_{k \in \mathbb{N}} = 
                                                                     \{H^0(X_n, \mathcal{O}_{X_n}(kL_{n,d,m}) \}_{k \in \mathbb{N}} \]
with respect to the flag $Y_\bullet: X_n \supset \pi_n^{-1}(L) \supset \{\pi_n^{-1}(y)\}$ (see \cite{LM08} for notation, constructions and proofs).  

\noindent To this purpose we first recall a fact that shows the inclusions
\begin{equation}
\Delta_{Y_\bullet}(V_\bullet^{(n,d,m)}) \subset \Delta_{Y_\bullet}(V_\bullet^{(n^\prime,d,m)}),\ n > n^\prime. 
\end{equation}

\begin{prop}[see~{\cite[Prop.4.1]{DKMS13}}] \label{incl-prop}
Let $X$ be a smooth projective algebraic surface and $\pi: \widetilde{X} \rightarrow X$ the blow up of $X$ in a point $p \in X$. Denote by $E \subset \widetilde{X}$ the exceptional line over $p$, and let $Y \subset X$ be a smooth irreducible curve such that $p \not\in Y$ and $y \in Y$ a point, constituting an admissible flag $Y_\bullet: \widetilde{X} \supset Y \supset \{y\}$ on $X$ that can also be taken as an admissible flag on $\widetilde{X}$. Finally, let $L$ be a divisor on $X$. Then for any $k \in \mathbb{N}$:
\[  \Delta_{Y_\bullet}(\pi^\ast L - k \cdot E) \subset \Delta_{Y_\bullet}(L) \subset \mathbb{R}^2. \]
\end{prop}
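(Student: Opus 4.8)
The plan is to use the description of the Okounkov body $\Delta_{Y_\bullet}(D)$ as the closed convex hull of the normalized valuation vectors $\tfrac{1}{m}\nu_{Y_\bullet}(s)$, where $s$ ranges over the nonzero sections of $H^0(mD)$ and $m$ over the positive integers. Since the closed-convex-hull operation preserves inclusions, it suffices to exhibit the valuation vectors arising from the graded linear series of $\pi^\ast L - kE$ as a \emph{subset} of those arising from $L$. Concretely, I would like to identify $H^0(\widetilde{X}, m(\pi^\ast L - kE))$ with a linear subspace of $H^0(X, mL)$ in a way that is compatible with the flag valuation.

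First I would set up the identification of sections. Because $E$ is effective and $\pi_\ast \mathcal{O}_{\widetilde{X}} = \mathcal{O}_X$, the projection formula gives $H^0(\widetilde{X}, m\pi^\ast L) = H^0(X, mL)$, and inside this group $H^0(\widetilde{X}, m\pi^\ast L - mkE)$ is exactly the subspace of those sections of $mL$ whose multiplicity at $p$ is at least $mk$ (using $\pi_\ast \mathcal{O}_{\widetilde{X}}(-mkE) = \mathfrak{m}_p^{\,mk}$). In particular each nonzero section $s$ of $m(\pi^\ast L - kE)$ corresponds to a well-defined nonzero section $\bar s$ of $mL$, and this correspondence is injective and compatible with the graded multiplication.

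The second, and key, step is to compare the two flag valuations. Since $K(\widetilde{X}) = K(X)$ and $\pi$ is an isomorphism over $X \setminus \{p\}$, and since the hypothesis $p \notin Y$ forces the strict transform $\widetilde{Y}$ (which here equals the total transform) to satisfy $\widetilde{Y} \cap E = \emptyset$ together with an isomorphism $\pi|_{\widetilde{Y}} \colon \widetilde{Y} \xrightarrow{\sim} Y$ carrying $\tilde y \mapsto y$, the rank-two valuation $\nu_{Y_\bullet}$ determined by the flag is literally the \emph{same} valuation whether computed on $\widetilde{X}$ or on $X$: the first coordinate is the order of vanishing along $Y \cong \widetilde{Y}$, and the second coordinate is the order at $y \cong \tilde y$ of the restriction to that curve, both of which are read off in a neighborhood of $Y$ where the blow-up is an isomorphism and the divisor $E$ does not intervene. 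Hence $\nu_{Y_\bullet}(s) = \nu_{Y_\bullet}(\bar s)$ for every corresponding pair.

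Combining the two steps, the normalized valuation vectors of the graded linear series of $\pi^\ast L - kE$ form a subset of those of $L$, and passing to closed convex hulls yields the desired inclusion $\Delta_{Y_\bullet}(\pi^\ast L - kE) \subset \Delta_{Y_\bullet}(L)$. I expect the only genuine subtlety, the main obstacle, to be the careful verification in the second step that twisting by $-mkE$ leaves the valuation vector unchanged; this is precisely where the assumption $p \notin Y$ is indispensable, since it guarantees that all of the data defining $\nu_{Y_\bullet}$ is supported away from the center $p$ of the blow-up and away from $E$. Once this locality is established, the argument is formal.
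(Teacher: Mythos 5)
Your argument is correct and is essentially the paper's own proof: the paper likewise identifies $H^0(\widetilde{X}, m\pi^\ast L - mk\cdot E)$ with the subspace of sections of $H^0(X, mL)$ vanishing to order $\geq mk$ at $p$, observes that identified sections agree on $X \setminus \{p\} \cong \widetilde{X} \setminus E$ so that (because $p \notin Y$) their flag valuations coincide, and concludes the inclusion of Okounkov bodies. Your only addition is to spell out the closed-convex-hull description of $\Delta_{Y_\bullet}$ underlying the final step, which the paper leaves implicit.
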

\begin{proof}
For all $n \in \mathbb{N}$ there is a natural inclusion
\[ H^0(\widetilde{X}, \mathcal{O}_{\widetilde{X}}(n\pi^\ast L - nk \cdot E)) \hookrightarrow H^0(X, \mathcal{O}_X(nL)) \]
identifying sections of $\mathcal{O}_{\widetilde{X}}(n\pi^\ast L - nk \cdot E)$ with sections of $\mathcal{O}_X(nL)$ having multiplicity $\geq nk$ in $p$. Identified sections are equal when identifying the line bundles  $\mathcal{O}_{\widetilde{X}}(n\pi^\ast L - nk \cdot E)$ and $\mathcal{O}_X(nL)$ on $X - \{p\} \cong \widetilde{X} - E$. Hence the valuations of these sections calculated with respect to the flag $Y_\bullet$ are equal, and the inclusion of Okounkov bodies follows. 
\end{proof}

\noindent Applying this proposition to an iterative sequence of blow ups can be used to define the following notion:
\begin{cor-def} \label{incl-cor}
Let $X$ be a smooth projective algebraic surface, and 
\[ X = X_0 \stackrel{\pi_1}{\leftarrow} X_1 \stackrel{\pi_2}{\leftarrow} \cdots \stackrel{\pi_n}{\leftarrow} X_n = \widetilde{X}\]
a sequence of blow ups in points $p_i \in X_i$, $i = 0, \ldots, n-1$. Let $D_0, \ldots, D_n$ be divisors on $X_0, \ldots, X_n$ such that 
\[ \pi_i^\ast D_{i-1} - D_i\ \mathit{is\ effective\ and}\ (\pi_i)_\ast(D_i) = D_{i-1},\ i = 1, \ldots, n. \]
Let $Y \subset X$ be a smooth irreducible curve and $y \in Y$ a point such that $p_0, (\pi_0 \circ \ldots \circ \pi_i)(p_i) \not\in Y$, $i = 1, \ldots, n-1$. Consider the admissible flag $Y_\bullet: X \supset Y \supset \{y\}$ which also can be seen as an admissible flag on all the $X_i$, $i = 1, \ldots, n$. Then:
\[ \Delta_{Y_\bullet}(D_0) \supset \Delta_{Y_\bullet}(D_1) \supset \cdots \supset \Delta_{Y_\bullet}(D_n). \]
This chain of inclusions is called the iterative Okounkov body dissection associated to $\pi_1, \ldots, \pi_n$ and $D_n$ (and the flag $Y_\bullet$). \hfill $\Box$
\end{cor-def}

\noindent The next theorem shows how the Okounkov bodies $\Delta_{Y_\bullet}(L_{n,m,d})$ with fixed $n$ but varying $d, m$ can be calculated from each other. The main reason for this connection is that the curve $L$ in the flag $Y_\bullet$ is so closely linked to the divisors $L_{n,d,m}$.

\begin{thm}
With notation as above, choose $n, d, m$ such that $L_{n,d,m}$ is big. Then for all $d^\prime, m^\prime$ such that $\epsilon^\prime := \frac{m^\prime}{d^\prime} \geq \frac{m}{d} =: \epsilon$,
\[ \frac{1}{d^\prime} \cdot \Delta_{Y_\bullet}(L_{n,d^\prime,m^\prime}) = \phi_{\epsilon^\prime/\epsilon}(\frac{1}{d} \cdot \Delta_{Y_\bullet}(L_{n,d,m})) \cap \Delta_{Y_\bullet}(\pi_n^\ast L) \]
where
\begin{equation} \label{rescale-form} 
\phi_r: \mathbb{R}^2 \rightarrow \mathbb{R}^2,\ (x,y) \mapsto r \cdot (x-1,y) + (1,0) 
\end{equation}
is the radial rescaling of $\mathbb{R}^2$ by a factor $r$ with center $(1,0)$.
\end{thm}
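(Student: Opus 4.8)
\emph{The plan} is to pass to normalized $\mathbb{R}$-divisors and then read off how the Zariski decompositions — and hence the two boundary functions $\alpha,\beta$ of Thm.~\ref{OB-surf-thm} — transform when the ratio $\epsilon=m/d$ is increased. First, using the homogeneity $\Delta_{Y_\bullet}(cD)=c\,\Delta_{Y_\bullet}(D)$ for $c>0$, I would rewrite
\[ \tfrac1d\,\Delta_{Y_\bullet}(L_{n,d,m}) = \Delta_{Y_\bullet}(M_\epsilon), \qquad M_\epsilon := \pi_n^\ast L - \epsilon\textstyle\sum_{i=1}^n E_i, \]
so that the normalized body depends only on $\epsilon$. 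Setting $r:=\epsilon'/\epsilon\ge 1$, the statement reduces to $\Delta_{Y_\bullet}(M_{\epsilon'}) = \phi_r\bigl(\Delta_{Y_\bullet}(M_\epsilon)\bigr)\cap \Delta_{Y_\bullet}(\pi_n^\ast L)$. The key algebraic observation, with $C:=\pi_n^\ast L$ the flag curve, is the identity $M_{\epsilon'}=r\,M_\epsilon-(r-1)\,C$, which upon subtracting $sC$ gives $M_{\epsilon'}-sC = r\,(M_\epsilon - tC)$ with $t=\tfrac{s+r-1}{r}$, i.e.\ $s=r(t-1)+1$.

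Next I would transport the Zariski decomposition through this relation. Since Zariski decomposition (Def.~\ref{ZD-def}) is homogeneous under positive scaling, writing $M_\epsilon - tC = P_t+N_t$ yields the Zariski decomposition $M_{\epsilon'}-sC = rP_t + rN_t$. Feeding this into the description (i)--(ii) of the constants $a,\mu$ and functions $\alpha,\beta$ following Thm.~\ref{OB-surf-thm}, and using that $\mathrm{ord}_x$ and intersection with $C$ are linear, I obtain $\mu':=\mu(M_{\epsilon'};C)=r(\mu-1)+1$ with $\mu:=\mu(M_\epsilon;C)$, and $\alpha'(s)=r\,\alpha(t)$, $\beta'(s)=r\,\beta(t)$. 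Because the affine change $s=r(t-1)+1$ together with $y\mapsto ry$ is exactly $\phi_r$, the two boundary graphs of $M_{\epsilon'}$ are the $\phi_r$-images of those of $M_\epsilon$; equivalently, the region cut out between $\alpha',\beta'$ over the extended interval $[1-r,\mu']$ is precisely $\phi_r(\Delta_{Y_\bullet}(M_\epsilon))$. I would also record that for both divisors the left endpoint is $a=a'=0$: the flag curve $C$ has $C^2=1>0$, so it can never belong to the negative-definite support of a negative part.

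The one genuinely non-formal point — and the step I expect to be the main obstacle — is the mismatch of parameter ranges. Although $a=a'=0$, the map $\phi_r$ sends the left endpoint $t=0$ of $\Delta_{Y_\bullet}(M_\epsilon)$ to first coordinate $1-r<0$, so $\phi_r(\Delta_{Y_\bullet}(M_\epsilon))$ overshoots the true body on the left, and the computation above gives only $\Delta_{Y_\bullet}(M_{\epsilon'})=\phi_r(\Delta_{Y_\bullet}(M_\epsilon))\cap\{x\ge 0\}$, where $\{x\ge 0\}$ denotes the half-plane on which the first coordinate is nonnegative. To convert the clip $\{x\ge0\}$ into the stated intersection with $\Delta_{Y_\bullet}(\pi_n^\ast L)$ I would invoke the independent inclusion $\Delta_{Y_\bullet}(M_{\epsilon'})\subseteq \Delta_{Y_\bullet}(\pi_n^\ast L)$ coming from the iterated dissection (Prop.~\ref{incl-prop}/Cor.-Def.~\ref{incl-cor}), valid because $M_{\epsilon'}$ arises from $\pi_n^\ast L$ by subtracting the effective divisor $\epsilon'\sum_{i=1}^n E_i$.

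Combining this inclusion with the trivial containment $\Delta_{Y_\bullet}(\pi_n^\ast L)\subseteq\mathbb{R}^2_+\subseteq\{x\ge0\}$ then yields both directions: on the one hand $\phi_r(\Delta_{Y_\bullet}(M_\epsilon))\cap\Delta_{Y_\bullet}(\pi_n^\ast L)\subseteq \phi_r(\Delta_{Y_\bullet}(M_\epsilon))\cap\{x\ge0\}=\Delta_{Y_\bullet}(M_{\epsilon'})$, and on the other hand $\Delta_{Y_\bullet}(M_{\epsilon'})\subseteq\phi_r(\Delta_{Y_\bullet}(M_\epsilon))\cap\Delta_{Y_\bullet}(\pi_n^\ast L)$. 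Hence the two sets coincide. The degenerate case in which $M_{\epsilon'}$ fails to be big corresponds to $\mu'\le 0$, where both sides collapse and the identity is immediate. Undoing the normalization of the first step then produces the claimed formula.
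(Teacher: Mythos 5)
Your proposal is correct and takes essentially the same route as the paper's proof: the scaling identity $M_{\epsilon'}-sC = r\,(M_\epsilon - tC)$ with $s = r(t-1)+1$, transporting Zariski decompositions and the data $a$, $\alpha$, $\beta$ of Thm.~\ref{OB-surf-thm} under positive rescaling, the vanishing $a = a' = 0$ because the flag curve cannot lie in a negative-definite support, and the collapse when $M_{\epsilon'}$ fails to be pseudoeffective. Your explicit clipping of $\phi_r\bigl(\Delta_{Y_\bullet}(M_\epsilon)\bigr)$ by the half-plane $\{x \geq 0\}$, converted into the stated intersection with $\Delta_{Y_\bullet}(\pi_n^\ast L)$ via the inclusion from Cor.-Def.~\ref{incl-cor}, is merely a more explicit rendering of the paper's observation that every $s \in [0,\mu_{\epsilon'}]$ arises from some $t \in [0,\mu_\epsilon]$.
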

\begin{proof}
 The proof of Prop.~\ref{incl-prop} and a standard calculation of Okounkov bodies (see \cite[Ex.2.3(a)]{KMS12}) show that
\[ \Delta_{Y_\bullet}(\pi_n^\ast L) = \Delta_{Y_\bullet}(L) = \{ (x,y) \in \mathbb{R}^2 | 0 \leq x, y, x + y \leq 1 \},  \]
the $2$-simplex. By Cor.~\ref{incl-cor} and a simple rescaling property of Okounkov bodies, $\frac{1}{d} \cdot \Delta_{Y_\bullet}(L_{n,d,m})$ and $\frac{1}{d^\prime} \cdot \Delta_{Y_\bullet}(L_{n,d^\prime,m^\prime})$ are both contained in $\Delta_{Y_\bullet}(\pi_n^\ast L)$. To apply Thm.~\ref{OB-surf-thm} to $L_{n,d,m}$ we set
\[ \mu_\epsilon := \sup \{ t > 0 | (1-t) \pi_n^\ast L - \epsilon \cdot \sum_{i=1}^n E_i\ \mathrm{is\ big} \}. \]
Then there exist a constant $0 \leq a_\epsilon \leq \mu_\epsilon$ and functions $\alpha_\epsilon, \beta_\epsilon: [a_\epsilon, \mu_\epsilon] \rightarrow \mathbb{R}_+$ such that
\[ \frac{1}{d} \cdot \Delta_{Y_\bullet}(L_{n,d,m}) = \{ (t,y) \in \mathbb{R}^2_+ | a_\epsilon \leq t \leq \mu_\epsilon, \alpha_\epsilon(t) \leq y \leq \beta_\epsilon(t) \}. \]
As discussed above $a_\epsilon$ is the coefficient of the irreducible divisor $\pi_n^{-1}(L)$ in the negative part $N_\epsilon$ of the Zariski decomposition $\pi_n^\ast L - \epsilon \cdot \sum_{i=1}^m E_i = P_\epsilon + N_\epsilon$. But $\pi_n^{-1}(L)$ cannot appear in the support of $N_\epsilon$ because the intersection matrix of the irreducible components of this support must be negative-definite, by Def.~\ref{ZD-def}. Hence $a_\epsilon = 0$.  

\noindent The functions $\alpha_\epsilon(t), \beta_\epsilon(t)$ are defined on the interval $[0, \mu_\epsilon]$ using the Zariski decompositions
\[ D_{t,\epsilon} := \pi_n^\ast L - \epsilon \cdot \sum_{i=1}^n E_i - t \cdot \pi_n^\ast L = (1 - t) \cdot \pi_n^\ast L - \epsilon \cdot \sum_{i=1}^n E_i = P_{t,\epsilon} + N_{t,\epsilon}, \]
by $\alpha_\epsilon(t) = \mathrm{ord}_x  N_{t,\epsilon|\pi_n^{-1}(L)}$ and $\beta_\epsilon(t) = \alpha_\epsilon(t) +  \pi_n^{-1}(L) \cdot P_{t,\epsilon}$. 

\noindent When we replace $\epsilon$ by $\epsilon^\prime$ the same formulas hold for $a_{\epsilon^\prime}, \alpha_{\epsilon^\prime}, \beta_{\epsilon^\prime}$ as long as $\pi_n^\ast L - \epsilon^\prime \cdot \sum_{i=1}^n E_i$ is pseudoeffective, that is $\mu_{\epsilon^\prime} \geq 0$. Note that $D_{s, \epsilon^\prime} = \epsilon^\prime/\epsilon \cdot D_{t, \epsilon}$ where $s = \epsilon^\prime/\epsilon \cdot (t-1) + 1$. Furthermore, $\epsilon^\prime/\epsilon \cdot D_{t, \epsilon} = \epsilon^\prime/\epsilon \cdot P_{t,\epsilon} + \epsilon^\prime/\epsilon \cdot N_{t,\epsilon}$ is a Zariski decomposition and $\phi_{\epsilon^\prime/\epsilon}(t,y) = (s, \epsilon^\prime/\epsilon \cdot y)$. Finally $\epsilon^\prime \geq \epsilon$ implies that for all $s \in [0, \mu_{\epsilon^\prime}]$ there exists a $t \in [0, \mu_{\epsilon}]$ such that $s = \epsilon^\prime/\epsilon \cdot (t-1) + 1$. The claim follows.

\noindent The claim also remains true if $\pi_n^\ast L - \epsilon^\prime \cdot \sum_{i=1}^n E_i$ is not any longer pseudo-effective: Then $\mu_{\epsilon^\prime} < 0$, and
$\phi_{\epsilon^\prime/\epsilon}(\frac{1}{d} \cdot \Delta_{Y_\bullet}(L_{n,d,m})) \subset \{ (x,y) \in \mathbb{R}^2: x < 0\}$
does not intersect $\Delta_{Y_\bullet}(L)$.
\end{proof}

\noindent Note that the theorem can be applied for every $n$ because $L_{n,d,m} = \pi_n^\ast L - m \cdot \sum_{i=1}^n E_i$ is ample and hence big whenever $\frac{m}{d}$ is small enough: If $\widetilde{C} \subset \widetilde{X}$ is a reduced and irreducible curve such that $C := \pi_n(\widetilde{C})$ is also a curve , then $\widetilde{C}$ is linearly equivalent to the divisor $\deg(C) \cdot \pi_n^\ast L - \sum_{i=1}^n \mathrm{mult}_{x_i}(C) \cdot E_i$. Since $\mathrm{mult}_{x_i}(C) \leq \deg(C)$ the inequality $\frac{m}{d} < \frac{1}{n}$ implies
\[ L_{n,d,m} \cdot \widetilde{C} = d \cdot \deg(C) - m \sum_{i=1}^n \mathrm{mult}_{x_i}(C) > 0. \]
Since also $L_{n,d,m} \cdot E_i = m > 0$ and $L_{n,m,d}^2 > 0$ for $m \ll d$, the Nakai-Moishezon Criterion \cite[Thm.1.2.23]{LazPAG1} implies that $L_{n,d,m}$ is ample.

\noindent By the rescaling property of Okounkov bodies, the analogon to (\ref{rescale-form}) also holds for $\mathbb{R}$-divisors of the form $\pi_n^\ast L - \epsilon \cdot \sum_{i=1}^n E_i$ for arbitrary $\epsilon \in \mathbb{R}$: For $\epsilon^\prime \geq \epsilon$ and $\pi_n^\ast L - \epsilon \cdot \sum_{i=1}^n E_i$ $\mathbb{R}$-big,
\begin{equation} \label{resc-OB2-form}
\Delta_{Y_\bullet}(\pi_n^\ast L - \epsilon^\prime \cdot \sum_{i=1}^n E_i) = \phi_{\epsilon^\prime/\epsilon}(\Delta_{Y_\bullet}(\pi_n^\ast L - \epsilon \cdot \sum_{i=1}^n E_i)) \cap \Delta_{Y_\bullet}(L). 
\end{equation} 

\noindent The next theorem calculates these Okounkov bodies when 
\[ \epsilon := \epsilon_n := \epsilon_{\mathbb{P}^2}(L; x_1, \ldots, x_n) := \sup \{ t > 0 | \pi_n^\ast L - t \cdot \sum_{i=1}^n E_i\ \mathrm{is\ ample} \} \]
is the multi-point Seshadri constant of the ample divisor $L$ on $\mathbb{P}^2$ in the points $x_1, \ldots, x_n \in \mathbb{P}^2$. Note that $\epsilon_{\mathbb{P}^2}(L; x_1, \ldots, x_n)$ is predicted to be irrational if $n > 9$ is not a square number, by Nagata's Conjecture (see Sec.~\ref{curves-sec}).
\begin{thm} \label{Sesh-OB-thm}
Setting $D_n := \pi_n^\ast L - \epsilon_n \cdot \sum_{i=1}^n E_i$, the Okounkov body of $D_n$ is
\[ \Delta_{Y_\bullet}(D_n) = \{ t_1 \cdot (D_n^2,0) + t_2 \cdot (0,1) | 0 \leq t_1, t_2, t_1 + t_2 \leq 1\}, \]
the convex hull of the points $(0,0)$, $(D_n^2,0)$ and $(0,1) \in \mathbb{R}^2$.
\end{thm}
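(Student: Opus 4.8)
The plan is to apply the Lazarsfeld--Musta\c{t}\u{a} description of Thm~\ref{OB-surf-thm}, through items (i) and (ii) stated immediately after it, to the divisor $D_n$ and the flag curve $C := \pi_n^{-1}(L)$, which has class $\pi_n^\ast L = e_0$ and $C^2 = 1$. First I would record the elementary facts that $D_n$ is nef (it is a limit of the ample classes $\pi_n^\ast L - t\sum_i E_i$, $t<\epsilon_n$), that $D_n\cdot\pi_n^\ast L = 1$, and that $D_n^2 = 1 - n\epsilon_n^2 =: \mu$. When $n\in\{1,4,9\}$ is a perfect square the values in Cor~\ref{Sesh-cor} give $\mu = 0$, so the asserted triangle degenerates to the segment $\{0\}\times[0,1]$; this case I would dispatch directly from the definition of the graded linear series, since $D_n$ is then nef but not big, $a=0$, and the single fibre has length $C\cdot D_n = 1$. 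So assume henceforth $n\in\{2,3,5,6,7,8\}$, where $D_n$ is nef and big but not ample.

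The heart of the argument is to compute the Zariski decompositions of $D_t := D_n - t\,\pi_n^\ast L = (1-t)\pi_n^\ast L - \epsilon_n\sum_i E_i$ for $t\in[0,\mu]$. Set $D_\mu := D_n - \mu\,\pi_n^\ast L = n\epsilon_n^2\,\pi_n^\ast L - \epsilon_n\sum_i E_i$. I claim that for every $t\in[0,\mu]$,
\[ D_t = \Bigl(1 - \tfrac t\mu\Bigr)D_n \;+\; \tfrac t\mu\, D_\mu \]
is the Zariski decomposition of $D_t$, with nef positive part $P_t = (1-\tfrac t\mu)D_n$ and negative part $N_t = \tfrac t\mu D_\mu$. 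Granting that $D_\mu$ is effective, this follows from uniqueness of Zariski decompositions (Def~\ref{ZD-def}): $P_t$ is a non-negative multiple of the nef class $D_n$; the identities $D_n\cdot\pi_n^\ast L = 1$ and $D_n^2 = \mu$ give $D_n\cdot D_\mu = 0$, hence $P_t\cdot D_\mu = 0$, and since $P_t$ is nef this forces $P_t\cdot\Gamma = 0$ for every component $\Gamma$ of the effective divisor $D_\mu$. In particular each such $\Gamma$ lies in $\mathrm{Null}(D_n)$, which is non-empty by Cor~\ref{Null-cor} and whose intersection matrix is negative definite by the Hodge-index remark following Lem~\ref{clos-ZC-lem}; since any subset of $\mathrm{Null}(D_n)$ is again negative definite, the support condition of Def~\ref{ZD-def} holds.

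Thus the main obstacle is to prove that $D_\mu$ is an effective $\mathbb{R}$-divisor. My plan here is to identify it explicitly with a positive combination of the curves $\Gamma_j$ computing the Seshadri constant $\epsilon_n$, read off the classification of exceptional curves in Thm~\ref{Big-Xn-thm} used in the proof of Cor~\ref{Sesh-cor}: writing $d_0(\Gamma_j) := \pi_n^\ast L\cdot\Gamma_j$ for their degrees, I expect the identity $D_\mu = \mu\sum_j d_0(\Gamma_j)\,\Gamma_j$, which I would verify by intersecting both sides against $e_0,e_1,\dots,e_n$. In each of the relevant cases the $\Gamma_j$ turn out to form an orthogonal system with $\Gamma_j^2 = -1$, which makes the verification uniform and immediately forces the coefficients $\mu\,d_0(\Gamma_j)$ to be positive; but since the $\Gamma_j$ vary with $n$ (the single conic $2e_0-\sum_{i=1}^5 e_i$ for $n=5$, the eight sextics $6e_0 - 3e_j - 2\sum_{i\ne j}e_i$ for $n=8$, and so on), I anticipate carrying this out as a short finite case analysis over $n\in\{2,3,5,6,7,8\}$. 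This is the only step that genuinely uses the geometry collected in Section~\ref{curves-sec}.

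With the Zariski decompositions in hand the remaining assertions fall out. Item (i) gives $a = 0$, since $C$ does not occur in $N_0 = 0$. For $\mu(D_n;C) = \mu$: if $t<\mu$ then $D_t$ is big, being the sum of the big nef class $(1-\tfrac t\mu)D_n$ and the effective class $\tfrac t\mu D_\mu$, while if $t>\mu$ then $(D_n - t\,\pi_n^\ast L)\cdot D_n = \mu - t < 0$ contradicts pseudo-effectivity because $D_n$ is nef; hence $\mu = D_n^2$. For the boundary functions, item (ii) gives $\alpha(t) = \mathrm{ord}_x N_{t|C} = \tfrac t\mu\,\mathrm{ord}_x D_{\mu|C} = 0$, because the components of $D_\mu \subseteq \mathrm{Null}(D_n)$ are strict transforms of curves through the $x_i$ and none of them meets the general flag point $x = \pi_n^{-1}(y)$ on $C$; and $\beta(t) = \alpha(t) + C\cdot P_t = (1-\tfrac t\mu)\,\pi_n^\ast L\cdot D_n = 1 - t/\mu$, a single affine segment with no breakpoints since $\{D_t : 0<t<\mu\}$ stays inside the one Zariski chamber found above. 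Feeding $a=0$, $\mu = D_n^2$, $\alpha\equiv 0$ and $\beta(t) = 1 - t/\mu$ into Thm~\ref{OB-surf-thm} yields precisely the triangle with vertices $(0,0)$, $(D_n^2,0)$ and $(0,1)$.
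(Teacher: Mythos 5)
Your case analysis for $n\in\{2,3,5,6,7,8\}$ is correct as far as it goes: I checked that in each case the Seshadri-computing classes from Thm.~\ref{Big-Xn-thm} are pairwise orthogonal $(-1)$-classes and that the identity $D_\mu=\mu\sum_j d_0(\Gamma_j)\Gamma_j$ holds (e.g.\ for $n=8$, $\frac{6}{289}\sum_{j=1}^{8}\bigl(6e_0-3e_j-2\sum_{i\neq j}e_i\bigr)=\frac{288}{289}e_0-\frac{6}{17}\sum_i e_i$), and your derivations of the Zariski decompositions, of $\mu(D_n;C)=D_n^2$, and of $\alpha\equiv 0$, $\beta(t)=1-t/\mu$ are sound. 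The genuine gap is one of scope: you prove the statement only for $n\leq 9$, while the theorem is stated for all $n$ and is invoked for arbitrary $n\geq 9$ in the very next theorem of the paper. Both directions of the equivalence with Nagata's Conjecture~\ref{Nag-conj} require knowing the shape of $\Delta_{Y_\bullet}(D_n)$ when $D_n$ is nef and big for $n>9$ --- which is precisely the scenario in which Nagata would fail, with $\epsilon_n<1/\sqrt{n}$ and possibly irrational. In that range Thm.~\ref{Big-Xn-thm} is unavailable, the curves computing $\epsilon_n$ are unknown, and your plan of exhibiting the negative part explicitly as a combination of classified $(-1)$-curves cannot even be formulated; no finite case analysis can cover it.

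The idea your proposal is missing is the paper's symmetry argument, which extracts just enough information about $\mathrm{Null}(D_n)$ without classifying it: no $C\in\mathrm{Null}(D_n)$ is an $E_i$ (since $D_n\cdot E_i=\epsilon_n>0$), and because permutations of the multiplicities are realized by Weyl group elements (Prop.~\ref{W-C-thm} and the very-generality principle of Sec.~\ref{curves-sec}), the set of classes in $\mathrm{Null}(D_n)$ is permutation-invariant, so $\sum_{C\in\mathrm{Null}(D_n)}C\equiv a\pi_n^\ast L-b\sum_i E_i$ with $a,b>0$. Solving $D_n\equiv k\pi_n^\ast L+t\sum_C C$ gives $t=\epsilon_n/b>0$, and $0=D_n\cdot\sum_C C=ka+t\,(\sum_C C)^2$ with $(\sum_C C)^2<0$ forces $k>0$. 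From there the paper runs exactly your endgame: $k=\mu_{\pi_n^\ast L}(D_n)$, the segment $\{D_n-s\pi_n^\ast L\}$ stays in $\overline{\Sigma_{D_n}}$ by Lem.~\ref{clos-ZC-lem}, $\alpha\equiv 0$, $\beta$ linear with $\beta(0)=1$ --- except that it pins down $k=D_n^2$ via the area identity $\mathrm{area}(\Delta_{Y_\bullet}(D_n))=\frac{1}{2}D_n^2$ from \cite[Thm.A]{LM08}, where your explicit decomposition yields $\mu=D_n^2$ directly; that is a modest gain bought at the price of the restriction to $n\leq 8$. One further caution on your degenerate case: ``directly from the definition of the graded linear series'' is not quite right --- for $n=9$ the sections of $kD_9$ are just powers of the unique cubic through the nine points, so the literal Okounkov body of the complete graded series is the single point $(0,0)$; the segment $\{0\}\times[0,1]$ arises, as in the paper's own proof, by reading the description of Thm.~\ref{OB-surf-thm} at the boundary value $\mu=0$, i.e.\ as the limit from the big cone, and you should state that convention rather than appeal to the definition.
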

\begin{proof}
If the nef divisor $D_n$ is not big then the constant $\mu(D_n,L)$ of Thm.~\ref{OB-surf-thm} vanishes. On the other hand, the negative part of the Zariski decomposition of $D_n$ is $0$, hence $\Delta_{Y_\bullet}(D_n)$ is the segment in $\mathbb{R}^2$ joining $(0,0)$ and $(0,1)$, by Thm.~\ref{OB-surf-thm}.

\noindent If the nef divisor $D_n$ is big, the set $\mathrm{Null}(D_n)$ introduced in section~\ref{OB-ZD-sec} is non-empty by Cor.~\ref{Null-cor}, and it consists of a finite number of irreducible and reduced curves $C_1, \ldots, C_K$ on $\widetilde{X}_n$ with negative-definite intersection matrix. The curve $C_j$ cannot be one of the exceptional divisors $E_i$ since then $D_n \cdot C_j = \epsilon_{\mathbb{P}^2}(L; x_1, \ldots, x_n) > 0$. Hence each $C_j$ is linearly equivalent to a divisor of the form $d_j \pi_n^\ast L - \sum_{i=1}^n m_{i,j} E_i$, with $d_j > 0, m_{i,j} \geq 0$, and we say that $C_j$ is of class $(d_j; m_{1,j}, \ldots, m_{n,j})$. Since $C_j^2 < 0$ there is at most one curve in $\mathrm{Null}(D_n)$ of a certain class.

\noindent Now, the set of classes of curves in $\mathrm{Null}(D_n)$ is invariant when the multiplicities $m_1, \ldots, m_n$ are permuted because such permutations are generated by elements of the Weyl group $W_n$, and we can apply the principle discussed in Sec.~\ref{curves-sec}. 

\noindent This implies that $\sum_{C \in \mathrm{Null}(D_n)} C$ is linearly equivalent to $a\pi_n^\ast L - b\cdot \sum_{i=1}^n E_i$, with $a, b > 0$. Furthermore, there are $k,t > 0$ such that
\[ D_n \equiv k \cdot \pi_n^\ast L + t \cdot \sum_{C \in \mathrm{Null}(D_n)} C :\]
$b, \epsilon_{\mathbb{P}^2}(L; x_1, \ldots, x_n) > 0$ imply $t > 0$. Since the intersection matrix of the $C_j$ is negative-definite we have $(\sum_{C \in \mathrm{Null}(D_n)} C)^2 < 0$. Consequently
\[ 0 = D_n \cdot  \sum_{C \in \mathrm{Null}(D_n)} C = ka + t \cdot ( \sum_{C \in \mathrm{Null}(D_n)} C)^2 \]
implies $k >0$.

\noindent Furthermore, $k = \mu_{\pi_n^\ast L}(D_n) := \max \{s | D_n - s \pi_n^\ast L\ \mathrm{pseudoeffective} \}$: Since $D_n - s \pi_n^\ast L \equiv (k-s) \pi_n^\ast L + t \cdot \sum_{C \in \mathrm{Null}(D_n)} C$ is pseudoeffective and $D_n$ is nef, we have
\[ 0 \leq D_n \cdot [ (k-s) \pi_n^\ast L + t \cdot \sum_{C \in \mathrm{Null}(D_n)} C ] = k - s, \]
so $s \leq k$. On the other hand, $D_n - k \pi_n^\ast L \equiv t \cdot \sum_{C \in \mathrm{Null}(D_n)} C$ is pseudoeffective.

\noindent Consequently, $D_n - s \pi_n^\ast L \in \overline{\Sigma_{D_n}}$ for all $0 \leq s \leq \mu_{\pi_n^\ast L}(D_n)$, by Lem.~\ref{clos-ZC-lem}. 

\noindent The form of $\Delta_{Y_\bullet}(D_n)$ then follows from Thm.~\ref{OB-surf-thm}: The $\alpha$-function is constantly $0$ if $x \not\in \bigcup_{C \in \mathrm{Null}(D_n)} \mathrm{Supp}(\pi_n(C))$, and the $\beta$-function is linear since the line given by $D_n - s \pi_n^\ast L$ always runs through the closure of the same Zariski chamber. Furthermore, the domain of the $\alpha$- and the $\beta$-function is $[0,k]$, and $\beta(0) = D_n \cdot \pi_n^\ast L = 1$. Since the area of $\Delta_{Y_\bullet}(D_n)$ is half of the volume of $D_n$ (\cite[Thm.A]{LM08}), that is $D_n^2/2$ as $D_n$ is nef (\cite[Cor.1.4.41 \& Def.2.2.31]{LazPAG1}), it follows that $k = D_n^2$.
\end{proof}

\noindent Using the facts on Seshadri constants collected in Sec.~\ref{curves-sec} we can finally calculate the Okounkov bodies $\Delta_{Y_\bullet}(L_{n,d,m})$, or at least predict their form.
\begin{thm}
Nagata's Conjecture holds for $n \geq 9$ if and only if for $d \geq \sqrt{n} \cdot m$ the Okounkov body $\Delta_{Y_\bullet}(L_{n,d,m})$ is the convex hull of $(0,0)$, $(d - \sqrt{n} \cdot m, 0)$, $(d - \sqrt{n} \cdot m, \sqrt{n} \cdot m)$ and $(0,d)$, that is a vertical strip in the triangle with corners $(0,0)$, $(d,0)$ and $(0,d)$.
\end{thm}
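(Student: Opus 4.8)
The plan is to apply the surface characterisation Thm.~\ref{OB-surf-thm} directly to $L_{n,d,m}$ with respect to the flag $Y_\bullet$, exploiting that $\epsilon_n = 1/\sqrt{n}$ is precisely the ampleness threshold along the ray $\{\pi_n^\ast L - t\sum E_i\}$. Throughout I use that the flag curve $\pi_n^{-1}(L)$ has class $\pi_n^\ast L$ (the $x_i$ lie off $L$), with $(\pi_n^\ast L)^2 = 1$ and $\pi_n^\ast L \cdot E_i = 0$. I would split the equivalence into the two implications.

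\emph{Forward direction (Nagata implies the shape).} Assume $\epsilon_n = 1/\sqrt n$. For $s \geq 0$ I would write $L_{n,d,m} - s\pi_n^\ast L = (d-s)\pi_n^\ast L - m\sum E_i = (d-s)\bigl(\pi_n^\ast L - \tfrac{m}{d-s}\sum E_i\bigr)$, whose self-intersection is $(d-s)^2 - nm^2$; this is positive exactly for $s < d - \sqrt n m$, so $\mu(L_{n,d,m};\pi_n^{-1}(L)) = d - \sqrt n m$. The constant $a$ of Thm.~\ref{OB-surf-thm} vanishes because $\pi_n^\ast L$ cannot sit in the negative part of a Zariski decomposition, its self-intersection $+1$ violating the negative-definiteness required in Def.~\ref{ZD-def}. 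For $0 \leq s < \mu$ the ratio $\tfrac{m}{d-s}$ is strictly below $1/\sqrt n = \epsilon_n$, so $L_{n,d,m} - s\pi_n^\ast L$ is ample, its Zariski decomposition is trivial, and items (i)--(ii) after Thm.~\ref{OB-surf-thm} give $\alpha(s) = \mathrm{ord}_x N_{s\mid\pi_n^{-1}(L)} = 0$ and $\beta(s) = \pi_n^\ast L \cdot (L_{n,d,m} - s\pi_n^\ast L) = d - s$. By continuity these persist at $s = \mu$, so Thm.~\ref{OB-surf-thm} yields $\Delta_{Y_\bullet}(L_{n,d,m}) = \{(s,y) : 0 \leq s \leq d - \sqrt n m,\ 0 \leq y \leq d - s\}$, the quadrilateral with vertices $(0,0)$, $(d-\sqrt n m, 0)$, $(d-\sqrt n m, \sqrt n m)$, $(0,d)$.

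\emph{Backward direction (the shape implies Nagata).} I would argue contrapositively. If Nagata fails then $\epsilon_n < 1/\sqrt n$, the bound $\epsilon_n \leq 1/\sqrt n$ being automatic since ampleness of $\pi_n^\ast L - t\sum E_i$ forces positive self-intersection $1 - nt^2$. Choose a rational $\tfrac m d \in (\epsilon_n, 1/\sqrt n)$; then $d > \sqrt n m$, so $(d,m)$ lies in the asserted range. As $\tfrac m d < 1/\sqrt n$ the divisor $L_{n,d,m}$ is big (positive self-intersection and positive intersection with an ample $\pi_n^\ast L - \delta\sum E_i$), but as $\tfrac m d > \epsilon_n$ it is not ample, hence not nef: here I use that along the ray the ampleness and nef thresholds coincide, since any nef member with $t \leq \nu$ exhibits every $\pi_n^\ast L - t'\sum E_i$ with $t' < \nu$ as a convex combination of an ample $\pi_n^\ast L - \delta\sum E_i$ and that nef member, hence ample. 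A big divisor that is not nef has nontrivial negative part $N \neq 0$, so $\mathrm{vol}(L_{n,d,m}) = P^2 = L_{n,d,m}^2 - N^2 > d^2 - nm^2$ because $N^2 < 0$. Were the Okounkov body the claimed quadrilateral its area would be $\tfrac12(d^2 - nm^2)$, whereas by \cite[Thm.A]{LM08} the area equals $\tfrac12\,\mathrm{vol}(L_{n,d,m}) > \tfrac12(d^2 - nm^2)$, a contradiction. Hence the shape forces $\epsilon_n = 1/\sqrt n$.

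The main obstacle is the backward direction: one must pin down that for $\tfrac m d$ just above the Seshadri constant the divisor is big-but-not-nef (which needs the equality of ample and nef thresholds along the ray) and then convert the failure of nefness into a genuine area discrepancy via $\mathrm{vol} = P^2 > L_{n,d,m}^2$. The forward direction is instead a clean application of Thm.~\ref{OB-surf-thm}, once one observes that $\epsilon_n = 1/\sqrt n$ keeps every $L_{n,d,m} - s\pi_n^\ast L$ ample up to the volume-vanishing threshold $\mu = d - \sqrt n m$.
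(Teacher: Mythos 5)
Your proposal is essentially correct, but it takes a genuinely different route from the paper. The paper never applies Thm.~\ref{OB-surf-thm} to $L_{n,d,m}$ directly in this proof: it first establishes Thm.~\ref{Sesh-OB-thm} (whose proof uses the Weyl-group symmetry of the very general points to control $\mathrm{Null}(D_n)$), observes that Nagata's Conjecture is equivalent to $D_n^2 = 0$, i.e.\ to the Okounkov body of the Seshadri-threshold divisor degenerating to the segment from $(0,0)$ to $(0,1)$, and then transfers this to $L_{n,d,m}$ via the radial rescaling formula (\ref{resc-OB2-form}); the converse inverts the rescaling and again invokes Thm.~\ref{Sesh-OB-thm}. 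You instead run the Zariski-chamber computation for $L_{n,d,m}$ itself ($N_s = 0$ on $[0,\mu)$, so $\alpha \equiv 0$, $\beta(s) = d-s$), and for the converse you convert failure of nefness at a rational slope $\tfrac{m}{d} \in (\epsilon_n, 1/\sqrt{n})$ into the strict volume inequality $\mathrm{vol}(L_{n,d,m}) = P^2 = L_{n,d,m}^2 - N^2 > d^2 - nm^2$, contradicting the area $\tfrac12(d^2-nm^2)$ of the asserted quadrilateral via \cite[Thm.A]{LM08}. This backward argument is more elementary and self-contained than the paper's (it avoids Thm.~\ref{Sesh-OB-thm} and the symmetry machinery entirely, and it makes fully explicit a step the paper leaves terse); your observation that the nef and ample thresholds coincide along the ray, by the convex-combination argument, is correct and genuinely needed.

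One step in your forward direction is, however, wrongly justified as written: you infer $\mu(L_{n,d,m};\pi_n^{-1}(L)) = d - \sqrt{n}\,m$ from the sign of $(d-s)^2 - nm^2$. Positive self-intersection (plus amplitude, which you verify) gives bigness on $[0, d-\sqrt{n}m)$, but non-positive self-intersection does \emph{not} preclude bigness on a surface: since $D^2 = P^2 + N^2$, a big divisor can have $D^2 < 0$. On these very surfaces with $n \leq 8$ the ray indeed stays big past the square-zero point; e.g.\ for $n=5$, $\pi_5^\ast L - t\sum_{i=1}^5 E_i = (5-10t)\bigl(\pi_5^\ast L - \tfrac25\sum E_i\bigr) + (5t-2)C$, with $C$ the conic through the five points, is big for all $t < \tfrac12$, although its square is negative for $t > 1/\sqrt{5}$. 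So the identity $\mu = d - \sqrt{n}m$ is not a formal consequence of the square --- it is exactly where the hypothesis $\epsilon_n = 1/\sqrt{n}$ must enter. The repair is one line: under Nagata, $D := \pi_n^\ast L - \tfrac{1}{\sqrt{n}}\sum_{i=1}^n E_i$ is nef (a limit of ample classes) with $D^2 = 0$, and for $s > d - \sqrt{n}m$ one computes $(L_{n,d,m} - s\pi_n^\ast L)\cdot D = (d-s) - \sqrt{n}\,m < 0$, so the class is not even pseudo-effective; hence $\mu = d - \sqrt{n}m$. You should also note the boundary case $d = \sqrt{n}\,m$ (possible for square $n$), where $L_{n,d,m}$ is nef but not big, so Thm.~\ref{OB-surf-thm} does not literally apply; there the body is the segment $\{0\}\times[0,d]$, which matches the degenerate convex hull. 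With these two repairs your proof is complete.
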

\begin{proof}
Nagata's Conjecture (see Conj.~\ref{Nag-conj}) predicts for $n \geq 9$ that 
\[ D_n^2 = (\pi_n^\ast L - \epsilon_n \sum_{i=1}^n E_i)^2 = 1 - n \cdot \frac{1}{\sqrt{n}^2} = 0. \]
Consequently, the Okounkov body $\Delta_{Y_\bullet}(D_n)$ is the line segment connecting $(0,0)$ and $(0,1)$, by Thm.~\ref{Sesh-OB-thm}. Rescaling and (\ref{resc-OB2-form}) yield the predicted form of $\Delta_{Y_\bullet}(L_{n,d,m})$.

\noindent Vice versa, if $\Delta_{Y_\bullet}(L_{n,d,m})$ is the vertical strip described in the statement, (\ref{rescale-form}) and rescaling show that $\Delta_{Y_\bullet}(\pi_n^\ast L - \frac{1}{\sqrt{n}}\sum_{i=1}^n E_i)$ is the line segment connecting $(0,0)$ and $(0,1)$. Then Thm.~\ref{Sesh-OB-thm} implies that $D_n = \pi_n^\ast L - \frac{1}{\sqrt{n}}\sum_{i=1}^n E_i$, that is 
\[ \epsilon_n = \frac{1}{\sqrt{n}}. \]
\end{proof}

\begin{thm}
The Okounkov body $\Delta_{Y_\bullet}(\pi_n^\ast L)$ is the triangle in $\mathbb{R}^2$ with corners $(0,0)$, $(1,0)$ and $(0,1)$.

\noindent For $1 \leq n \leq 8$ the Okounkov body $\Delta_{Y_\bullet}(\pi_n^\ast L - \frac{1}{3} \sum_{i=1}^n E_i)$ is a quadrilateral in $\mathbb{R}^2$ with corners $(0,0)$, $(\delta_n,0)$, $(\epsilon_n^\prime, 1 - \epsilon_n^\prime)$ and $(0,1)$, where $\epsilon_n^\prime := 1 - \frac{1}{3\epsilon_n}$ and $\delta_n := 1 - \frac{n\epsilon_n}{3}$.

\noindent The values of $\epsilon_n$, $\epsilon_n^\prime$ and $\delta_n$ for $n = 1, \ldots, 8$ are summarized in the following table:
\[ \begin{array}{c|cccccccc}
     n & 1 & 2 & 3 & 4 & 5 & 6 & 7 & 8 \\ \hline
     \epsilon_n & 1 & \frac{1}{2} & \frac{1}{2} & \frac{1}{2} & \frac{2}{5} & \frac{2}{5} & \frac{3}{8} & \frac{6}{17} \\
     \epsilon_n^\prime &  \frac{2}{3} & \frac{1}{3} & \frac{1}{3} & \frac{1}{3} & \frac{1}{6} & \frac{1}{6} & \frac{1}{9} & 
     \frac{1}{18} \\
     \delta_n & \frac{2}{3} & \frac{2}{3} & \frac{1}{2} & \frac{1}{3} & \frac{1}{3} & \frac{1}{5} & \frac{1}{8} & \frac{1}{17}  
    \end{array} \]

\noindent The iterative Okounkov body dissection associated to $\pi_1, \ldots, \pi_9$ and $\pi_9^\ast L - \frac{1}{3} \sum_{i=1}^9 E_i$ is given by the diagram below.

\noindent The Okounkov bodies $\Delta_{Y_\bullet}(L_{n,d,m})$, $n \leq 9$, are obtained from $\Delta_{Y_\bullet}(\pi_n^\ast L - \frac{1}{3} \sum_{i=1}^n E_i)$ using (\ref{resc-OB2-form}) and rescaling.
\end{thm}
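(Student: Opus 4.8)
\noindent The first assertion is already in hand: as recorded in the proof of the rescaling formula, $\Delta_{Y_\bullet}(\pi_n^\ast L) = \Delta_{Y_\bullet}(L)$ equals the standard $2$-simplex with corners $(0,0)$, $(1,0)$, $(0,1)$, by Prop.~\ref{incl-prop} and the toric computation \cite[Ex.2.3(a)]{KMS12}. For the quadrilateral I would apply Thm.~\ref{OB-surf-thm} directly to $D := \pi_n^\ast L - \frac13 \sum_{i=1}^n E_i$ with flag curve $C = \pi_n^{-1}(L)$, following the ray $D_t := D - tC = (1-t)\pi_n^\ast L - \frac13 \sum_{i=1}^n E_i = (1-t)F_{s(t)}$, where $F_s := \pi_n^\ast L - s\sum_{i=1}^n E_i$ and $s(t) = \frac{1}{3(1-t)}$. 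Since $\epsilon_n > \frac13$ for $1 \leq n \leq 8$ by Cor.~\ref{Sesh-cor}, the divisor $D = F_{1/3}$ is ample, so Thm.~\ref{OB-surf-thm} applies; and exactly as in Thm.~\ref{Sesh-OB-thm} the class $\pi_n^{-1}(L) = \pi_n^\ast L$ has positive self-intersection and hence cannot lie in any negative part, so $a = 0$. The plan is then to show $\alpha \equiv 0$ and that $\beta$ is piecewise linear with a single interior breakpoint, which pins down the four corners.

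\noindent For $\alpha \equiv 0$ I would argue that every curve occurring in the support of a negative part $N_t$ is either an $E_i$ (disjoint from $C$, as the $x_i$ avoid $L$) or the strict transform of a plane curve through some of the $x_i$; only finitely many classes occur across all Zariski chambers met by the ray, and each of the corresponding plane curves meets $L$ in finitely many points, so a general $y \in L$ yields $\mathrm{ord}_{\pi_n^{-1}(y)} N_{t|C} = 0$. For $\beta$ I would split the ray at $t = \epsilon_n' = 1 - \frac{1}{3\epsilon_n}$, the value where $s(t) = \epsilon_n$. On $[0,\epsilon_n']$ the class $F_{s(t)}$ is nef, so $N_t = 0$ and $\beta(t) = \pi_n^\ast L \cdot D_t = 1-t$, giving the segment from $(0,1)$ to $(\epsilon_n', 1-\epsilon_n')$, the latter point being where $D_t$ is proportional to the nef Seshadri divisor $D_n = F_{\epsilon_n}$. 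On $[\epsilon_n',\mu]$, where $s(t) > \epsilon_n$, the symmetry argument of Thm.~\ref{Sesh-OB-thm} (permutations of the $E_i$ lie in $W_n$, cf. Prop.~\ref{self-0-prop}) forces the negative part to be a positive multiple of the symmetric effective divisor $\Gamma = \sum_{C \in \mathrm{Null}(D_n)} C \equiv a\pi_n^\ast L - b\sum_{i=1}^n E_i$, and the positive part to be a positive multiple $\lambda_s D_n$ of $D_n$; uniqueness of Zariski decompositions then shows the ray remains in the single chamber $\Sigma_{D_n}$, so $\beta(t) = (1-t)\lambda_{s(t)}$ is again affine in $t$, the factor $(1-t)$ cancelling the pole of $s(t)$.

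\noindent The right endpoint is $\mu = \sup\{t : D_t\ \text{big}\} = 1 - \frac{1}{3\tau_n}$ with $\tau_n := \sup\{s : F_s\ \text{big}\}$, and I expect the main obstacle to be the identity $\tau_n = \frac{1}{n\epsilon_n}$, equivalently $\mu = \delta_n$. I would establish it by duality: nefness of $D_n$ gives $D_n \cdot F_s = 1 - n\epsilon_n s \geq 0$ on pseudo-effective $F_s$, so $\tau_n \leq \frac{1}{n\epsilon_n}$; conversely $D_n \cdot \Gamma = 0$ forces $b/a = \frac{1}{n\epsilon_n}$, so $F_{1/(n\epsilon_n)} = \frac1a \Gamma$ is effective, hence pseudo-effective but not big (it is its own negative part), and writing $F_s$ as a positive combination of the big class $\pi_n^\ast L$ and $F_{1/(n\epsilon_n)}$ shows $F_s$ is big exactly for $s < \frac{1}{n\epsilon_n}$. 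Evaluating the affine factor at $s = \tau_n$ gives $\lambda_{\tau_n} = 0$, so $\beta(\mu) = 0$ and the ray closes at $(\delta_n,0)$, producing the asserted quadrilateral. The case $n=1$, where $D_1$ is nef but not big and $\epsilon_1' = \delta_1$, degenerates to a trapezoid and I would check it by hand.

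\noindent The table then follows by substituting the values of $\epsilon_n$ from Cor.~\ref{Sesh-cor} into $\epsilon_n' = 1 - \frac{1}{3\epsilon_n}$ and $\delta_n = 1 - \frac{n\epsilon_n}{3}$. For the iterative dissection, I would note that the divisors $D_n = \pi_n^\ast L - \frac13\sum_{i=1}^n E_i$ satisfy $\pi_n^\ast D_{n-1} - D_n = \frac13 E_n \geq 0$ and $(\pi_n)_\ast D_n = D_{n-1}$, so Cor.-Def.~\ref{incl-cor} yields the nested chain of bodies; it terminates at $n = 9$, where $\epsilon_9 = \frac13$ makes $D_9$ the Seshadri divisor with $D_9^2 = 0$, whose body is the segment joining $(0,0)$ and $(0,1)$ by Thm.~\ref{Sesh-OB-thm}, and assembling the quadrilaterals of the table gives the diagram. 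Finally, the bodies $\Delta_{Y_\bullet}(L_{n,d,m})$ for $n \leq 9$ are obtained from $\Delta_{Y_\bullet}(\pi_n^\ast L - \frac13\sum_{i=1}^n E_i)$ by rescaling and (\ref{resc-OB2-form}).
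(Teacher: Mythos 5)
Your argument is correct, but it is packaged quite differently from the paper, whose proof of this theorem is essentially a citation: the paper treats Thm.~\ref{Sesh-OB-thm} as a black box giving the triangle $\Delta_{Y_\bullet}(D_n)$ with corners $(0,0)$, $(D_n^2,0)$, $(0,1)$, and then reads off the quadrilateral from the rescaling formula (\ref{resc-OB2-form}): the map $\phi_{1/(3\epsilon_n)}$ fixes the hypotenuse $x+y=1$ and sends $(0,0)\mapsto(\epsilon_n^\prime,0)$, $(0,1)\mapsto(\epsilon_n^\prime,1-\epsilon_n^\prime)$, $(D_n^2,0)\mapsto(\delta_n,0)$, so the rescaled triangle glues onto the nef trapezoid $\{0\le t\le\epsilon_n^\prime,\ 0\le y\le 1-t\}$ to give exactly your four corners. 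You instead re-run Thm.~\ref{OB-surf-thm} directly along the ray $D-tC$, importing from the proof of Thm.~\ref{Sesh-OB-thm} the symmetric divisor $\Gamma \equiv a\pi_n^\ast L - b\sum_i E_i$ with $D_n\cdot\Gamma=0$; your chamber step is sound, though as phrased the logic is slightly inverted: symmetry alone does not ``force'' the decomposition — one writes $F_s=\lambda_s D_n+\nu_s\Gamma$ with $\lambda_s=(1-n\epsilon_n s)/D_n^2$ and $\nu_s=(1-\lambda_s)/a$, checks both are nonnegative precisely for $\epsilon_n\le s\le 1/(n\epsilon_n)$, verifies this is a Zariski decomposition, and only then invokes uniqueness (which is in effect what you do). What your route genuinely buys is the right-hand endpoint: inside Thm.~\ref{Sesh-OB-thm} the paper pins down $k=D_n^2$ via the area-equals-half-volume identity of \cite[Thm.A]{LM08}, whereas your threshold computation $\tau_n=1/(n\epsilon_n)$ — upper bound from $D_n\cdot F_s\ge 0$, lower bound from the effectivity of $F_{1/(n\epsilon_n)}=\frac{1}{a}\Gamma$, which is its own negative part and hence not big — is elementary, self-contained, and consistent with the paper (the parameter $s^\prime=1-\epsilon_n/s$ reaches $D_n^2$ exactly at $s=1/(n\epsilon_n)$). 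Two cosmetic points, neither a gap: permutations of the $e_i$ are generated by the simple reflections $s_1,\dots,s_{n-1}$ via the principle following Prop.~\ref{W-C-thm}, not Prop.~\ref{self-0-prop}; and in the $\alpha\equiv 0$ step the finiteness of the relevant curves is cleanest stated \emph{after} your chamber analysis (all negative parts along the ray are supported on $\mathrm{Null}(D_n)\cup\{E_i\}$, so a general $y\in L$ avoids the finitely many points of $\bigcup_{C\in\mathrm{Null}(D_n)}\pi_n(C)\cap L$), avoiding an apparent circularity. Your separate treatment of $n=1$, where $D_1$ is nef but not big, $\delta_1=\epsilon_1^\prime=\frac{2}{3}$, and the second leg of $\beta$ degenerates so that the body is the trapezoid with vertical right edge through $(\frac{2}{3},0)$ and $(\frac{2}{3},\frac{1}{3})$, is also correct and matches the stated corners; the remaining assertions (table, iterative dissection via Cor.-Def.~\ref{incl-cor} with $\pi_n^\ast D_{n-1}-D_n=\frac{1}{3}E_n$, and the segment at $n=9$ from $D_9^2=0$) are handled as in the paper.
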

\begin{proof}
$\Delta_{Y_\bullet}(\pi_n^\ast L)$ can be calculated directly from Thm.~\ref{OB-surf-thm}.

\noindent The statements on $\Delta_{Y_\bullet}(\pi_n^\ast L - \frac{1}{3} \sum_{i=1}^n E_i)$ and $\Delta_{Y_\bullet}(L_{n,d,m})$ for $1 \leq n \leq 9$ are direct consequences of Thm.~\ref{Sesh-OB-thm}, the list of Seshadri constants in Cor.~\ref{Sesh-cor} and the rescaling formula (\ref{resc-OB2-form}).
\end{proof}

\begin{center}
\begin{tikzpicture}
\draw (0,0) -- (10.2,0); 
\node [below] at (0.6,0) {$\frac{1}{17}$}; \node [below] at (1.27,0) {$\frac{1}{8}$}; \node [below] at (2.04,0) {$\frac{1}{5}$};
\node [below] at (3.4,0) {$\frac{1}{3}$};
\node [below] at (5.1,0) {$\frac{1}{2}$}; \node [below] at (6.8,0) {$\frac{2}{3}$}; \node [below] at (10.2,0) {$1$};

\draw (0,0) -- (0, 10.2);
\node [left] at (0,3.4) {$\frac{1}{3}$}; \node [left] at (0,6.8) {$\frac{2}{3}$}; \node [left] at (0,8.5) {$\frac{5}{6}$};
\node [left] at (0,9.07) {$\frac{8}{9}$}; \node [left] at (0,9.63) {$\frac{17}{18}$}; \node [left] at (0,10.2) {$1$};

\draw [dashed] (0,3.4) -- (6.8,3.4); \draw [dashed] (0,6.8) -- (3.4,6.8); \draw [dashed] (0,8.5) -- (1.7,8.5); 
\draw [dashed] (0,9.07) -- (1.13,9.07); \draw [dashed] (0,9.63) -- (0.57,9.63); 

\draw (0,10.2) -- (10.2,0);
\draw (0.57,9.63) -- (0.6,0); \draw (1.13,9.07) -- (1.27,0); \draw (1.7,8.5) -- (2.04,0); \draw (1.7,8.5) -- (3.4,0); \draw (3.4,6.8) -- (3.4,0); 
\draw (3.4,6.8) -- (5.1,0); \draw (3.4,6.8) -- (6.8,0); \draw (6.8,3.4) -- (6.8,0);
\end{tikzpicture}
\end{center}

\bibliographystyle{alpha}

\def\cprime{$'$}

%\bibliography{/Forschung/Mathematik/doktor}
%\bibliography{doktor}

\end{document}